\renewcommand{\@biblabel}[1]{#1.} 
\theoremstyle{plain}
\newtheorem{theorem}{Theorem}[section]
\newtheorem{conjecture}[theorem]{Conjecture}
\newtheorem{lemma}[theorem]{Lemma}
\newtheorem{corollary}[theorem]{Corollary}
\newtheorem{proposition}[theorem]{Proposition}
\theoremstyle{definition}
\newtheorem{definition}[theorem]{Definition}
\newtheorem{construction}[theorem]{Construction}
\newtheorem{remark}[theorem]{Remark}
\newtheorem{problem}[theorem]{Problem}
\begin{document}


\title{
	On existence of integral point sets and their diameter bounds
	\footnote{
		This work was carried out at Voronezh State University and supported by the Russian Science
		Foundation grant 19-11-00197.
	}
}

\author{
	N.N. Avdeev
	\footnote{nickkolok@mail.ru, avdeev@math.vsu.ru}
}

\maketitle

\paragraph{Abstract.}
A point set $M$ in $m$-dimensional Euclidean space is called an integral point set if all the distances between the
elements of $M$ are integers, and $M$ is not situated on an $(m-1)$-dimensional hyperplane.
We improve the linear lower bound for diameter of planar integral point sets.
This improvement takes into account some results related to the Point Packing in a Square problem.
Then for arbitrary integers $m \geq 2$, $n \geq m+1$, $d \geq 1$
we give a construction of an integral point set $M$ of $n$ points in $m$-dimensional Euclidean space,
where $M$ contains points $M_1$ and $M_2$ such that distance between $M_1$ and $M_2$ is exactly $d$.

\section{Introduction}
Let $\mathbb{N}$ be the set of all positive integers and let $|M_1 M_2|$ denote the Euclidean distance
between points $M_1$ and $M_2$ in a finite-dimensional space $\mathbb{R}^m$
(and, more generally, let $|\Delta|$ denote the length of line segment $\Delta$).
An \textit{integral point set} in $m$-dimensional Euclidean space is a point set $M$ such that all the distances between the
points of $M$ are integers and $M$ is not situated on an $(m-1)$-dimensional hyperplane.
Erd\"os and Anning proved~\cite{anning1945integral,erdos1945integral} that every integral point set consists of a finite number of points.
Taking this into account, we denote the set of all integral point sets of $n$ points in $m$-dimensional Euclidean space by
$\mathfrak{M}(m,n)$ (using the notation in~\cite{our-vmmsh-2018})
and denote the set of all integral point sets in $m$-dimensional Euclidean space by $\mathfrak{M}(m,\mathbb{N})$.
The symbol $\# M$ will be used for cardinality of $M$, that is the number of points in $M$ in our case.

For every finite point set, its diameter is naturally defined as
\begin{equation}
	\operatorname{diam} M = \max_{A,B\in M} |AB|
	.
\end{equation}
Another emerging question is: how does the diameter of an integral point set depend on its cardinality?
One can easily see that every $M\in\mathfrak{M}(m,n)$ with $\operatorname{diam} M = h$
can be dilated to $M_p\in\mathfrak{M}(m,n)$ with $\operatorname{diam} M = ph$
for every $p\in\mathbb{N}$.
So, the above question should be rephrased:
how does \textit{the least possible} diameter of an integral point set depend on its cardinality?
In order to answer this question, the following function was introduced~\cite{kurz2008bounds,kurz2008minimum}:
\begin{equation}
	d(m,n) = \min_{M\in\mathfrak{M}(m,n)} \operatorname{diam} M = \min_{M\in\mathfrak{M}(m,n)} \max_{A,B\in M} |AB|
	.
\end{equation}
We also refer to~\cite{kurz2008bounds} for a list of known exact values of $d(m,n)$
and its bounds; in the present paper, the case of $m=2$ will mostly be in the focus.

The most significant breakthrough on the planar case was done by Solymosi~\cite{solymosi2003note},
who proved that $cn \leq d(2,n)$ for a sufficiently small constant $c$.
Following Solymosi's proof carefully,
one can derive that the inequality holds at least for $c = 1/24$.
(See~\cite[Exercise 2.6]{garibaldi2005erdos} for some remarks.)
The constant was improved in~\cite{our-mz-rus} to $1/8$ for all $n$ and in~\cite{our-vmmsh-2018}
to $3/8$ for sufficiently large $n$.

The paper~\cite{solymosi2003note} contains one more interesting result.
Let us define a function which is ``dual'' to $d(m,n)$ in some sense:
\begin{equation}
	l(m,n) = \min_{M\in\mathfrak{M}(m,n)} \min_{A,B\in M} |AB|
	.
\end{equation}
Solymosi proved that $l(2,n)\leq 2$.

In the present paper we improve Solymosi's results:
first, we obtain a larger constant $c = 5/11$ in Theorem~\ref{thm:main_estimate},
using the combined approach with the Point Packing in a Square problem
(this approach is different from Solymosi's one);
second, we prove that $l(m,n)=1$ for all possible $m$ and $n$.

\section{Lower bound for the diameter}

In this section,
we improve the lower bound for minimum diameter of planar integral point sets
employing Point Packing in a Square problem.
Below we introduce the problem, basic notions and results.

\begin{problem}[Point Packing in a Square (PPS)~\cite{locatelli2002packing,costa2013valid}]
	\label{problem:PPS}
	Given an integer $k > 1$, place $k$ points in the unit square $U = [ 0 , 1 ]^2$ such that their
	minimum pairwise distance $m$ is maximal.
\end{problem}

\begin{definition}
	For each $k > 1$, the corresponding maximal distance $m$ from Problem~\ref{problem:PPS}
	is called the $k$-th PPS coefficient and denoted by $\varphi_k$.
\end{definition}
So, it's impossible to place $k$ points in a unit square in such a way that each pairwise distance of the points is greater than $\varphi_k$.

\begin{theorem}
	\label{thm:varphi_k_bounds}
	\cite{costa2013valid}
	For every $k\geq 2$ the following inequality holds:
	\begin{equation*}
		\sqrt{\frac{2}{k\sqrt{3}}}
		\leq
		\varphi_k
		\leq
		\frac{1}{k-1} +
		\sqrt{
			\frac{1}{(k-1)^2}
			+
			\frac{2}{(k-1)\sqrt{3}}
		}
	\end{equation*}
\end{theorem}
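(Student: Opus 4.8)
The two inequalities are of opposite character and I would prove them by unrelated means: the right-hand inequality says that no placement of $k$ points in $U$ can be too spread out, while the left-hand one asserts the existence of a reasonably good placement and hence calls for an explicit construction. For the upper bound I would start from an optimal configuration $P_1,\dots,P_k\in U$, so that $|P_iP_j|\ge\varphi_k$ whenever $i\ne j$, and rescale the whole picture by the factor $1/\varphi_k$; this produces $k$ points at pairwise distance $\ge 1$ lying in the square $[0,1/\varphi_k]^2$, whose area is $1/\varphi_k^2$ and whose perimeter is $4/\varphi_k$. Then I would invoke the classical Oler inequality: any $n$ points at pairwise distance $\ge 1$ contained in a convex region $K$ satisfy $n\le\frac{2}{\sqrt3}\operatorname{Area}(K)+\frac12\operatorname{perimeter}(K)+1$. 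In our case this reads
\begin{equation*}
	k \;\le\; \frac{2}{\sqrt3}\cdot\frac{1}{\varphi_k^2} \;+\; \frac{2}{\varphi_k} \;+\; 1 ,
\end{equation*}
equivalently $\frac{\sqrt3}{2}(k-1)\varphi_k^2-\sqrt3\,\varphi_k-1\le 0$. For $k\ge 2$ this quadratic in $\varphi_k$ opens upward and has one negative and one positive root, so $\varphi_k$ is at most the positive root, and simplifying the radical there turns $\frac{\sqrt3+\sqrt{3+2\sqrt3(k-1)}}{\sqrt3(k-1)}$ into exactly $\frac{1}{k-1}+\sqrt{\frac{1}{(k-1)^2}+\frac{2}{(k-1)\sqrt3}}$, which is the stated bound.

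For the lower bound I would argue by averaging. Put $w=\sqrt{2/(k\sqrt3)}$ and let $\Lambda$ be the triangular (hexagonal) lattice whose minimum distance equals $w$; its covolume, i.e.\ its area per lattice point, is $\frac{\sqrt3}{2}w^2=1/k$. Translate $\Lambda$ by a vector chosen uniformly from one fundamental domain: the expected number of points of the translated lattice landing inside the unit square $U$ equals $\operatorname{Area}(U)$ divided by the covolume, that is, exactly $k$. Since the average is $k$, some translate $\Lambda+v$ contains at least $k$ points of $U$, and any two of them lie at distance $\ge w$ because they are points of $\Lambda$. Discarding surplus points leaves a feasible placement for Problem~\ref{problem:PPS} with minimum pairwise distance at least $w$, whence $\varphi_k\ge w=\sqrt{2/(k\sqrt3)}$.

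The averaging step is essentially routine once one has the identity ``expected lattice-point count $=$ area divided by covolume'', and $w$ is chosen precisely so that this expectation is the integer $k$, so no limiting argument is needed. The genuine content is hidden in the upper bound: the sharp constant $\frac{2}{\sqrt3}$ together with the exact boundary term $\frac12\operatorname{perimeter}+1$ of Oler's inequality. A naive substitute — disjoint disks of radius $\varphi_k/2$ inside the square enlarged by $\varphi_k/2$ on each side, combined with a plane packing-density bound — is not rigorous for a bounded region and, even used formally, does not reproduce these constants, so I would cite Oler's theorem rather than redo it. As a consistency check I would note that the two bounds agree to leading order, $\varphi_k=(1+o(1))\sqrt{2/(k\sqrt3)}$, which is the familiar fact that the optimal point-packing density in a square approaches the density of the hexagonal lattice.
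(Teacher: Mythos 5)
The paper does not prove this statement at all: Theorem~\ref{thm:varphi_k_bounds} is imported verbatim from the cited reference, so there is no internal proof to compare against. Your sketch is correct and is essentially the standard derivation found in that literature: rescaling an optimal configuration by $1/\varphi_k$ and applying Oler's inequality gives $(k-1)\varphi_k^2 - 2\varphi_k - \tfrac{2}{\sqrt{3}} \le 0$, whose positive root simplifies to exactly the stated upper bound, while the averaged translate of the hexagonal lattice with covolume $\tfrac{\sqrt{3}}{2}w^2 = 1/k$ (so that the expected count in $U$ is the integer $k$, and some translate therefore contains at least $k$ points) gives the lower bound. Both halves check out; the only external input is Oler's theorem, which you correctly identify as the genuine content and rightly cite rather than reprove.
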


To prove the bound on $d(2,n)$, we also need the following results and notions from~\cite{our-vmmsh-2018}.

\begin{lemma}
	\cite[Lemma 4]{our-vmmsh-2018}
	\label{lem:square_container}
	Let $M\in\mathfrak{M}(2,n)$, $\operatorname{diam} M = d$.
	Then $M$ is situated in a square of side length $d$.
\end{lemma}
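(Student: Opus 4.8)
The statement is purely metric: it holds for an arbitrary bounded subset of the plane, so the integrality hypothesis plays no role. The plan is to bound the extent of $M$ along two perpendicular coordinate axes, deduce that $M$ lies in its axis-parallel bounding box, and observe that this box fits inside a square of side $d$.

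First I would use that orthogonal projection of $\mathbb{R}^2$ onto a line is a contraction: if $\pi$ denotes such a projection, then $|\pi(A)\pi(B)| \le |AB|$ for all $A,B$. Projecting $M$ onto the $x$-axis and setting $a = \min_{P\in M} x(P)$ and $b = \max_{P\in M} x(P)$ (both attained since $M$ is finite by the Erd\"os--Anning theorem), choose $A,B\in M$ with $x(A)=a$, $x(B)=b$. Then
\[
	b - a = |\pi(A)\pi(B)| \le |AB| \le \operatorname{diam} M = d .
\]
Applying the same argument to the $y$-axis yields $e - c \le d$, where $c = \min_{P\in M} y(P)$ and $e = \max_{P\in M} y(P)$.

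Hence $M \subseteq [a,b]\times[c,e]$, a closed axis-parallel rectangle whose two side lengths are each at most $d$. Since $[a,b]\subseteq[a,a+d]$ and $[c,e]\subseteq[c,c+d]$, we obtain $M \subseteq [a,a+d]\times[c,c+d]$, a square of side length $d$, which proves the lemma.

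I do not expect any genuine obstacle here: the only points deserving care are that the extremal coordinates are actually attained (ensured by finiteness of $M$) and that the conclusion asks for a \emph{closed} square of side exactly $d$, so the non-strict inequalities $b-a\le d$ and $e-c\le d$ are precisely what is needed. Alternatively, one could simply invoke the general fact that a set of diameter $d$ has width at most $d$ in every direction and therefore lies in a square of side $d$; the coordinate-axis computation above is just the concrete instance of this.
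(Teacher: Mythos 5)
Your proof is correct. The paper does not reprove this lemma --- it only cites it from an earlier work --- and the argument you give (orthogonal projections are contractions, hence the axis-parallel bounding box has both sides at most $d$ and fits in a square of side $d$) is the standard proof of this purely metric fact; the integrality hypothesis is indeed irrelevant, and you correctly note that attainment of the extremal coordinates follows from finiteness of $M$.
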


\begin{definition}
	A \textit{cross} for points $M_1$ and $M_2$, denoted by $cr(M_1,M_2)$, is the union of two straight lines:
	the line through $M_1$ and $M_2$,
	and the perpendicular bisector of line segment $M_1 M_2$.
\end{definition}

\begin{lemma}
	\label{lem:intervals_cross}
	If open line segments $M_1 M_2$ and $M_3 M_4$ do not intersect,
	then the set $cr(M_1,M_2) \cap cr(M_3,M_4)$ is either a straight line or contains 2 or 4 points.
\end{lemma}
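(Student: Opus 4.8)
The plan is to split each cross into its two constituent lines and analyse the four resulting pairwise intersections. Write $cr(M_1,M_2)=\ell_{12}\cup b_{12}$, where $\ell_{12}$ is the line through $M_1,M_2$ and $b_{12}$ is the perpendicular bisector of $M_1M_2$; these two lines are perpendicular and meet exactly at the midpoint $P$ of $M_1M_2$. Likewise $cr(M_3,M_4)=\ell_{34}\cup b_{34}$, with $\ell_{34}\perp b_{34}$ meeting at the midpoint $Q$ of $M_3M_4$. Distributing,
\[ cr(M_1,M_2)\cap cr(M_3,M_4)=(\ell_{12}\cap\ell_{34})\cup(\ell_{12}\cap b_{34})\cup(b_{12}\cap\ell_{34})\cup(b_{12}\cap b_{34}), \]
and each of the four terms is empty (parallel distinct lines), a single point (non-parallel lines), or a whole line (coinciding lines). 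Thus the intersection either contains a line or is a finite set of at most four points, and the work is to pin this alternative down.

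The hypothesis enters in a single place: disjoint open segments $M_1M_2$ and $M_3M_4$ cannot share a midpoint, since such a point would be interior to both; hence $P\neq Q$. First I would dispose of the case in which one of the four terms is a whole line, i.e.\ two of the four lines coincide. Perpendicularity excludes $\ell_{12}=b_{12}$ and $\ell_{34}=b_{34}$, and one checks that any two coincidences among the four lines holding at once — for instance $\ell_{12}=\ell_{34}$ together with $b_{12}=b_{34}$, or $\ell_{12}=b_{34}$ together with $b_{12}=\ell_{34}$ (the latter being exactly the situation $cr(M_1,M_2)=cr(M_3,M_4)$) — force $P=Q$, so they are impossible here. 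Therefore at most one term is a line, and a short direct check then shows the other three terms contribute nothing new (they are empty or single points already lying on that line), so the whole intersection is that line — the first alternative.

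In the remaining case no two of the four lines coincide, so the intersection is a finite set $\{X_1,X_2,X_3,X_4\}$, with $X_1=\ell_{12}\cap\ell_{34}$ and so on, where some of the four may be absent (parallel lines) and some may coincide; the count of distinct points is what must be determined. Two observations drive this. First, $\ell_{12}$ cannot be parallel to both $\ell_{34}$ and $b_{34}$, since those are perpendicular to each other, and likewise for $b_{12}$; hence at least one $X_i$ is present and the intersection is never empty. Second, if two of the $X_i$ coincide, their common point lies on both lines of one of the two crosses and therefore equals that cross's center, $P$ or $Q$; but $P$ cannot lie on both $\ell_{34}$ and $b_{34}$ (else $P=Q$), and $Q$ cannot lie on both $\ell_{12}$ and $b_{12}$. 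I would finish with the case analysis on the mutual position of $\ell_{12}$ and $\ell_{34}$ — parallel, perpendicular, or neither — recording in each case which of the $X_i$ are present and which collisions are permitted by the two observations above, and reading off the number of distinct points, which is to be $2$ or $4$ as claimed. This finite bookkeeping is the real content of the lemma and the main obstacle; it is also the place where one must be most careful, since it is precisely the collinearities ``$P$ on a line of the other cross'' and ``$Q$ on a line of the other cross'' that decide whether the four points stay distinct or merge, and it is the disjointness hypothesis (through $P\neq Q$) that keeps these from all occurring together.
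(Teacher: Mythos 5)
The paper states Lemma~\ref{lem:intervals_cross} without any proof, so there is no argument of the author's to compare yours against; I can only judge the proposal on its own terms. Your overall strategy is the natural one and is essentially sound: decomposing each cross into the two perpendicular lines $\ell$ and $b$, distributing the intersection over the four pairs, extracting from the hypothesis the single fact that the midpoints satisfy $P\neq Q$, and using that fact both to rule out a double coincidence of lines (hence at most one of the four terms is a full line, and then the other three terms lie inside it) and to rule out the ``fatal'' mergers of intersection points. This correctly yields everything the paper actually uses, namely that $cr(M_1,M_2)\cap cr(M_3,M_4)$ is either a single straight line or a set of at least $2$ and at most $4$ points (only the upper bound $4\le 6$ enters case (a) of the proof of Theorem~\ref{thm:main_estimate}).

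The genuine gap is in the endgame that you explicitly defer and call ``the real content of the lemma'': if you carry out that bookkeeping, it does not close to ``$2$ or $4$''. In the general-position case all four points $X_1,\dots,X_4$ exist, and your own observation shows that a coincidence $X_i=X_j$ forces the common point to be $P$ or $Q$; what it does not show is that such a coincidence is impossible. Exactly one merger can occur, namely when the center of one cross lies on one line of the other cross, and then the intersection has exactly $3$ points. Concretely, take $M_1=(0,0)$, $M_2=(1,0)$, $M_3=(5,0)-(\cos 30^\circ,\sin 30^\circ)$, $M_4=(5,0)+(\cos 30^\circ,\sin 30^\circ)$: the open segments are disjoint, $Q=(5,0)$ lies on $\ell_{12}$, so $\ell_{12}\cap\ell_{34}=\ell_{12}\cap b_{34}=Q$, while $b_{12}=\{x=1/2\}$ meets $\ell_{34}$ and $b_{34}$ in two further distinct points; the intersection of the crosses is exactly $3$ points. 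So the conclusion ``contains 2 or 4 points'' cannot be reached by your route because it is false as literally stated; the statement your argument proves (and the one that should be recorded, since it suffices for Theorem~\ref{thm:main_estimate}) is that the intersection is either a straight line or consists of $2$, $3$ or $4$ points.
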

It is important that we consider the intersection of open line segments,
so e.g. the cases $M_1 = M_3$ and $M_3 \in M_1 M_2$ satisfy the conditions of Lemma~\ref{lem:intervals_cross}.

\begin{lemma}
	\label{lemma:quadr_diag_edges}
	Let $ABCD$ be a convex quadrilateral on the plane.
	Then $\max\{AC,BD\}>\min\{AB,BC,CD,DA\}$,
	that is at least one diagonal is greater than at least one side.
\end{lemma}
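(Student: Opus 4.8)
The plan is to exploit the fact that the diagonals of a convex quadrilateral cross at an interior point and then to add up four triangle inequalities. Write $P$ for the intersection point of the diagonals $AC$ and $BD$; convexity of $ABCD$ guarantees that $P$ exists, lies strictly between $A$ and $C$ on one diagonal and strictly between $B$ and $D$ on the other, and is distinct from all four vertices. In particular $AC = AP + PC$ and $BD = BP + PD$.

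Next I would apply the strict triangle inequality in each of the four triangles $ABP$, $BCP$, $CDP$, $DAP$:
\[
AP + PB > AB,\quad BP + PC > BC,\quad CP + PD > CD,\quad DP + PA > DA.
\]
Summing these and regrouping the left-hand side as $2(AP+PC) + 2(BP+PD) = 2\,AC + 2\,BD$ gives
\[
2\,(AC + BD) > AB + BC + CD + DA \geq 4\min\{AB,BC,CD,DA\}.
\]
Hence $AC + BD > 2\min\{AB,BC,CD,DA\}$, so $\max\{AC,BD\} \geq \tfrac12(AC+BD) > \min\{AB,BC,CD,DA\}$, which is exactly the assertion.

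The only delicate point — and the one I would treat as the (minor) obstacle — is justifying that the four triangle inequalities are \emph{strict}, i.e.\ that $P$ is never collinear with a pair of adjacent vertices. This is where non-degeneracy of a convex quadrilateral is used: no three vertices are collinear and $P$ is an interior point different from every vertex, so $\{A,B,P\}$, $\{B,C,P\}$, $\{C,D,P\}$, $\{D,A,P\}$ are all genuine (non-degenerate) triangles. If one prefers to avoid the point $P$ entirely, there is an alternative argument: the interior angles of the quadrilateral sum to $2\pi$, so at least one of them, say $\angle ABC$, is $\geq \pi/2$; the law of cosines in triangle $ABC$ then yields $AC^2 \geq AB^2 + BC^2 > AB^2$, whence $AC > AB \geq \min\{AB,BC,CD,DA\}$ and a fortiori $\max\{AC,BD\} > \min\{AB,BC,CD,DA\}$.
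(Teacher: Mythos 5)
Your proof is correct. Note that the paper states Lemma~\ref{lemma:quadr_diag_edges} without any proof at all, so there is nothing to compare against; your write-up actually supplies what the paper omits. Both of your arguments are sound: the summation over the four triangles $ABP$, $BCP$, $CDP$, $DAP$ is airtight once you observe (as you do) that $P$ lies strictly inside both diagonals and cannot be collinear with two adjacent vertices, since that would force three vertices of the quadrilateral onto a line. Your alternative argument --- some interior angle is at least $\pi/2$ because the four angles sum to $2\pi$, and then the law of cosines gives $AC^2 \geq AB^2 + BC^2 > AB^2$ --- is shorter and even gives the slightly stronger conclusion that one diagonal exceeds \emph{two} of the sides; either version would serve the paper's purposes (the lemma is used in Theorem~\ref{thm:main_estimate} only to conclude that two minimal segments cannot cross).
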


Basing on the exact values of $d(2,n)$ for $ 3 \leq n\leq 122$ and the estimate $d(2,123)>10000$
\cite{kurz2008bounds}, we derive the following proposition.
\begin{proposition}
	\label{obs:4_leq_n_leq_21491}
	The inequality
	\begin{equation}
		d(2,n) \geq 3^{1/4}\cdot2^{-3/2} \cdot n
	\end{equation}
	holds for $4 \leq n \leq 21491$.
\end{proposition}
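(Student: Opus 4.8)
The plan is to split the stated range at $n=122$, the largest argument for which \cite{kurz2008bounds} records the exact value of $d(2,n)$, and to treat the two resulting pieces by different elementary arguments. Note that this proposition is a ``base case'' and does not use the Point Packing in a Square machinery at all (the PPS lower bound $1/\varphi_n$ is only of order $\sqrt n$ and is far too weak here).

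For $4\le n\le 122$ I would argue by a direct finite verification: for each such $n$ one compares the tabulated value $d(2,n)$ from \cite{kurz2008bounds} with $3^{1/4}\cdot 2^{-3/2}\cdot n$. Since $3^{1/4}\cdot 2^{-3/2}=0.46530\ldots$, the right-hand side never exceeds $57$ on this interval, whereas the recorded minimal diameters exceed it throughout by a wide margin (integral point sets are forced to be fairly spread out, so even the small cases have diameters in the tens or more, and $d(2,122)$ in particular is well above $57$). Thus each of these finitely many inequalities holds with substantial slack, and in the final text this step reduces to a reference to the table.

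For $123\le n\le 21491$ I would combine monotonicity of $d(2,\cdot)$ with the estimate $d(2,123)>10000$ from \cite{kurz2008bounds}. Monotonicity: for every $n\ge 3$ one has $d(2,n)\le d(2,n+1)$, because from any $M\in\mathfrak{M}(2,n+1)$ one may delete a point lying outside a fixed non-collinear triple of $M$ (possible since $\#M\ge 4$), which produces a non-collinear integral point set of $n$ points of no larger diameter. Iterating from $n=123$ gives $d(2,n)>10000$ for every $n\ge 123$. On the other hand $10000\cdot 2^{3/2}\cdot 3^{-1/4}=21491.4\ldots$, so for $123\le n\le 21491$ we obtain $3^{1/4}\cdot 2^{-3/2}\cdot n\le 3^{1/4}\cdot 2^{-3/2}\cdot 21491<10000<d(2,n)$, which is in fact slightly stronger than the asserted inequality.

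The only point that calls for care is the last numerical comparison, which is genuinely tight — this is precisely why the threshold is the odd-looking number $21491=\lfloor 10000\cdot 2^{3/2}\cdot 3^{-1/4}\rfloor$ rather than a rounder value, the margin being under a tenth of a percent. To keep this rigorous and avoid floating-point issues I would clear the radicals and check the equivalent integer inequality $3\cdot 21491^{4}<64\cdot 10^{16}$, i.e. $3\cdot 461863081^{2}<64\cdot 10^{16}$ (using $21491^{2}=461863081$), which is an exact one-line computation. Everything else — the table lookup and the single-point-deletion argument — is routine, so I do not expect any real obstacle beyond this arithmetic.
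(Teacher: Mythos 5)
Your proposal is correct and follows essentially the same route as the paper, which states the proposition without a written proof but explicitly cites exactly your two ingredients: the tabulated exact values of $d(2,n)$ for $3 \leq n \leq 122$ and the bound $d(2,123)>10000$ from the same source. You merely make explicit the monotonicity step $d(2,n)\leq d(2,n+1)$ and the tight numerical check $3^{1/4}\cdot 2^{-3/2}\cdot 21491 < 10000$ that the paper leaves implicit; both are verified correctly.
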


Therefore, we will focus on planar integral points sets of more than 21491 points.

Performing some simple manipulations with the upper bound in Theorem~\ref{thm:varphi_k_bounds},
one can derive the following proposition.
\begin{proposition}
	\label{varphi_n_where_n_geq_21492}
	For $n \geq 21492$ we have
	\begin{equation}
		\varphi_n \leq \varphi_{n-1} \leq \frac{\beta}{\sqrt{n-2}}
		,
	\end{equation}
	where
	\begin{equation}
		\beta = \frac{1}{\sqrt{21490}} + \sqrt{ \frac{2}{\sqrt{3}} + \frac{1}{21490} } < 1.07464
		.
	\end{equation}
\end{proposition}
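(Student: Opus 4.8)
The plan is to combine the trivial monotonicity of the PPS coefficients with the explicit upper bound of Theorem~\ref{thm:varphi_k_bounds}, coarsening the resulting expression using only the hypothesis $n-2\geq 21490$. The monotonicity $\varphi_n\leq\varphi_{n-1}$ needs no computation: if $n$ points of the unit square $U$ realize the minimum pairwise distance $\varphi_n$, then deleting any one of them leaves $n-1$ points of $U$ whose minimum pairwise distance is still at least $\varphi_n$, so $\varphi_{n-1}\geq\varphi_n$. (Non-strict monotonicity is all that is needed here.)

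Next I would apply the upper bound of Theorem~\ref{thm:varphi_k_bounds} with $k=n-1$, so that $k-1=n-2$, obtaining
\[
\varphi_{n-1}\leq\frac{1}{n-2}+\sqrt{\frac{1}{(n-2)^2}+\frac{2}{(n-2)\sqrt{3}}}.
\]
Everything left is to bound the right-hand side by $\beta/\sqrt{n-2}$, and the hypothesis $n\geq 21492$, i.e.\ $n-2\geq 21490$, is used exactly twice: inside the radical, $\tfrac{1}{(n-2)^2}\leq\tfrac{1}{21490\,(n-2)}$, and in the leading term, $\tfrac{1}{n-2}=\tfrac{1}{\sqrt{n-2}}\cdot\tfrac{1}{\sqrt{n-2}}\leq\tfrac{1}{\sqrt{21490}}\cdot\tfrac{1}{\sqrt{n-2}}$. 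Substituting these two estimates and pulling the common factor $\tfrac{1}{\sqrt{n-2}}$ out of both summands turns the bracket into $\tfrac{1}{\sqrt{21490}}+\sqrt{\tfrac{1}{21490}+\tfrac{2}{\sqrt{3}}}$, which is precisely $\beta$; this yields $\varphi_n\leq\varphi_{n-1}\leq\beta/\sqrt{n-2}$.

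The numeric bound on $\beta$ is then a short direct evaluation: bound $2/\sqrt{3}$ and $1/\sqrt{21490}$ from above by suitable decimals and use monotonicity of the square root. I do not expect any genuine obstacle in the argument; the closest thing to a subtlety is bookkeeping of the inequality directions — each rounding must \emph{enlarge} the term it touches, which is exactly why the passages $\tfrac{1}{(n-2)^2}\to\tfrac{1}{21490\,(n-2)}$ and $\tfrac{1}{n-2}\to\tfrac{1}{\sqrt{21490}\,\sqrt{n-2}}$ require $n-2\geq 21490$ and not the reverse.
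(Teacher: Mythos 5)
Your derivation of the displayed inequality is correct and is exactly the ``simple manipulations'' the paper alludes to without writing out: monotonicity of $\varphi_k$ by deleting a point, Theorem~\ref{thm:varphi_k_bounds} with $k=n-1$, and the two coarsenings $\tfrac{1}{(n-2)^2}\leq\tfrac{1}{21490(n-2)}$ and $\tfrac{1}{n-2}\leq\tfrac{1}{\sqrt{21490}\sqrt{n-2}}$, both in the right direction. However, the final step you dismiss as ``a short direct evaluation'' does not go through: with $\tfrac{1}{\sqrt{21490}}\approx 0.00682$ and $\sqrt{\tfrac{2}{\sqrt{3}}+\tfrac{1}{21490}}\approx 1.07459$, the sum is $\beta\approx 1.08141$, which is \emph{not} less than $1.07464$. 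The stated decimal apparently bounds only the square-root summand and omits the $1/\sqrt{21490}$ term; had you actually carried out the evaluation you would have caught this. The slip is in the proposition's statement rather than in your argument (and it is harmless downstream, since Theorem~\ref{thm:main_estimate} uses the symbolic expression for $\beta$ and its numerical constant $0.45557$ is computed from $\beta\approx 1.0814$), but a proof must not assert that a false numerical inequality follows by inspection.
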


Now we need to estimate the cardinality of an intersection of an integral point set with a line segment.
Assuming that the planar integral point sets contains many collinear points,
the following result holds.
\begin{theorem}~\cite[Theorem 4]{kurz2008minimum}
	For $\delta > 0$, $\varepsilon > 0$, and $P\in\mathfrak{M}(2,n)$ with
	at least $n^\delta$ collinear points there exists a $n_0 (\varepsilon)$
	such that for all $n \geq n_0 (\varepsilon)$ we have
	\begin{equation}
		\operatorname{diam} P \geq n^{\frac{\delta}{4 \log 2(1+\varepsilon)}\log \log n}
		.
	\end{equation}
\end{theorem}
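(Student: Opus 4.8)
\medskip
\noindent\textbf{Proof plan.}\quad
Since $P$ is not contained in a line, there is a point $Q\in P$ off the line $g$ carrying the $\ell\ge n^{\delta}$ collinear points $P_{0},\dots,P_{\ell-1}$, and every distance $r_{i}:=|QP_{i}|$ is an integer. I would work only with the Pythagorean identity $r_{i}^{2}=(x_{i}-p)^{2}+h^{2}$: take $g$ as the $x$-axis with $P_{i}=(x_{i},0)$, where $x_{0}=0<x_{1}<\dots<x_{\ell-1}$ and $x_{i}\in\mathbb{Z}$ (legitimate, since the pairwise distances along $g$ are integers), and $Q=(p,h)$ with $h>0$. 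Subtracting the $i=0$ instance gives $r_{i}^{2}-r_{0}^{2}=x_{i}(x_{i}-2p)$, so $2p\in\mathbb{Q}$; writing $2p=a/q$ in lowest terms, one reads off $q\mid x_{i}$ for every $i$, hence $q\le x_{\ell-1}=|P_{0}P_{\ell-1}|\le\operatorname{diam}P$.

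The core step is to exhibit one integer of size polynomial in $\operatorname{diam}P$ having at least $\ell$ divisors. Put $H:=4q^{2}h^{2}$; from $H=4q^{2}r_{0}^{2}-a^{2}$ it is a positive integer, and $H\le 4(\operatorname{diam}P)^{4}$. Multiplying the Pythagorean identity by $4q^{2}$ and using $a=2qp$ turns it into $\bigl(2qr_{i}-(2qx_{i}-a)\bigr)\bigl(2qr_{i}+(2qx_{i}-a)\bigr)=H$, a factorization of $H$ into two positive integers (a hypotenuse exceeds a leg because $h>0$). Since $s\mapsto\sqrt{s^{2}+H}-s$ is strictly decreasing and the numbers $2qx_{i}-a$ are pairwise distinct, the divisors $d_{i}:=2qr_{i}-(2qx_{i}-a)$ of $H$ are pairwise distinct, so $H$ has at least $\ell\ge n^{\delta}$ positive divisors.

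To conclude, I would invoke the classical maximal order of the divisor function (Wigert): for every $\varepsilon'>0$, as soon as $N$ is large the number of its divisors is at most $N^{(1+\varepsilon')\log 2/\log\log N}$. Since $H$ has at least $n^{\delta}$ divisors it also satisfies $H\ge n^{\delta}$, whence $\log\log H\ge(1-o(1))\log\log n$; substituting $n^{\delta}$ into the Wigert bound and rearranging gives $\log H\ge\frac{\delta\log n\cdot\log\log H}{(1+\varepsilon')\log 2}\ge\frac{\delta\log n\log\log n}{(1+\varepsilon')^{2}\log 2}$ for $n\ge n_{0}$. Comparing with $\log H\le\log 4+4\log\operatorname{diam}P\le 4(1+\varepsilon')\log\operatorname{diam}P$ (valid for large $n$, since then $\operatorname{diam}P$ is large) and renaming $(1+\varepsilon')^{3}$ as $1+\varepsilon$ yields $\operatorname{diam}P\ge n^{\frac{\delta}{4\log 2\,(1+\varepsilon)}\log\log n}$.

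I expect the second paragraph to be the main obstacle: one must clear denominators (the factor $2q$) so that every quantity entering the factorization of $H$ is an integer while keeping $H$ within a fixed power of $\operatorname{diam}P$, and simultaneously ensure the $d_{i}$ are distinct — the monotonicity of $s\mapsto\sqrt{s^{2}+H}-s$ handles both at once. The last paragraph is then routine asymptotic bookkeeping, merging the several factors $1+\varepsilon'$ and the additive constants into a single $1+\varepsilon$ by taking $n_{0}(\varepsilon)$ large enough.
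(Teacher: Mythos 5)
The paper does not prove this statement at all: it is quoted verbatim from the cited source (Theorem~4 of \emph{Kurz--Wassermann}) and used only as background, so there is no in-paper proof to compare against. Judged on its own, your argument is correct and is in fact the standard route (going back to Erd\H{o}s--Anning and used in the cited source): reduce to integer coordinates on the line, show the denominator $q$ of $2p$ divides every $x_i$ and hence is at most $\operatorname{diam}P$, clear denominators to get the factorization $\bigl(2qr_i-(2qx_i-a)\bigr)\bigl(2qr_i+(2qx_i-a)\bigr)=H$ with $H=4q^2h^2\le 4(\operatorname{diam}P)^4$, observe via the monotonicity of $s\mapsto\sqrt{s^2+H}-s$ that the $\ell\ge n^{\delta}$ resulting divisors are distinct, and finish with Wigert's maximal order of the divisor function. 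All the individual steps check out (positivity of the two factors from $h>0$, $d(N)\le N$ giving $H\ge n^{\delta}$ and hence $\log\log H\ge(1-o(1))\log\log n$, absorption of the constants into $1+\varepsilon$).

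One small caveat worth recording: your step $\log\log H\ge\log\log n+\log\delta\ge(1-o(1))\log\log n$ forces the threshold to depend on $\delta$ as well, i.e.\ you really prove the bound for $n\ge n_0(\delta,\varepsilon)$ rather than $n\ge n_0(\varepsilon)$. This looseness is inherited from the quoted statement itself (where $\delta$ is presumably meant to be fixed before $n_0$ is chosen), not a defect introduced by your argument.
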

However, the estimate is rather unsuitable for our needs,
as it does not provide the values of all the constants.
To obtain the needed estimate, we now prove a generalization of~\cite[lemma 3]{our-vmmsh-2018}.

\begin{definition}
	For a line segment $M_1 M_2$ and an integer $k$, such that $-|M_1 M_2| < k < |M_1 M_2|$,
	we define a $\rho(k,M_1 M_2)$-curve as the set of points $N$
	for which the equality $|N M_1| - |N M_2| = k$ holds.
\end{definition}
So, in the planar case a $\rho(k,M_1 M_2)$-curve
is a branch of a hyperbola for $k\neq 0$ and the perpendicular bisector of line segment $M_1 M_2$ for $k=0$.

\begin{proposition}
	\label{obs:rho_curves}
	If points $M_1,M_2,M_3,M_4$ are situated on a straight line,
	the equality $|M_1 M_2| = |M_3 M_4|$ holds and line segments $M_1 M_2$ and $M_3 M_4$ do not coincide,
	then for a fixed $k$ the $\rho(k,M_1 M_2)$-curve and the $\rho(k,M_3 M_4)$-curve
	do not intersect.
\end{proposition}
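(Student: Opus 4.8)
The plan is to set up coordinates on the common line of $M_1,M_2,M_3,M_4$ and to reduce the claim to a monotonicity statement about the function $x\mapsto |xM_1|-|xM_2|$ as the point $x$ ranges over the plane, exploiting that the two segments are equal in length but positioned differently along the line. First I would place the line as the $x$-axis, writing $M_1=(a_1,0)$, $M_2=(a_2,0)$, $M_3=(a_3,0)$, $M_4=(a_4,0)$; let $L=|M_1M_2|=|M_3M_4|$. Since the segments do not coincide but have equal length, after relabelling we may assume the midpoints differ, say the midpoint of $M_3M_4$ lies strictly to the right of the midpoint of $M_1M_2$; equivalently $M_3M_4$ is obtained from $M_1M_2$ by a translation through a nonzero vector $t$ along the axis. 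The key observation is then that for \emph{every} point $N$ in the plane, $|NM_3|-|NM_4| = |N'M_1|-|N'M_2|$ where $N'=N+t$, i.e. the $\rho(k,M_3M_4)$-curve is just the $\rho(k,M_1M_2)$-curve translated by $-t$.

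The main step is to show that a $\rho(k,M_1M_2)$-curve and its nonzero translate along the axis are disjoint. For this I would argue that the signed quantity $f(N)=|NM_1|-|NM_2|$ is, along every horizontal line $y=\text{const}$, a strictly monotone function of the $x$-coordinate — strictly increasing if $a_1>a_2$, strictly decreasing if $a_1<a_2$ — so each level set $\{f=k\}$ meets each horizontal line in at most one point. Monotonicity follows by differentiating: $\partial_x\big(\sqrt{(x-a_1)^2+y^2}-\sqrt{(x-a_2)^2+y^2}\big)$ has the sign of $\frac{x-a_1}{\sqrt{(x-a_1)^2+y^2}}-\frac{x-a_2}{\sqrt{(x-a_2)^2+y^2}}$, and the map $s\mapsto s/\sqrt{s^2+y^2}$ is strictly increasing in $s$, so this derivative is nonzero (with a fixed sign depending only on the sign of $a_1-a_2$) except possibly at points with $y=0$ lying between $a_1$ and $a_2$, which do not lie on any $\rho(k,\cdot)$-curve for the relevant range of $k$ (or can be handled directly). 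Hence on each horizontal line the $\rho(k,M_1M_2)$-curve and the $\rho(k,M_3M_4)$-curve, being two distinct level points of strictly monotone functions that are horizontal translates of one another, cannot coincide; running over all horizontal lines gives disjointness of the curves.

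I would finish by noting the one edge case that needs care: the value $k=0$, where the ``curves'' are the two perpendicular bisectors, which are vertical lines $x=\tfrac{a_1+a_2}{2}$ and $x=\tfrac{a_3+a_4}{2}$; these are distinct precisely because the segments' midpoints differ, so they are parallel and disjoint, consistent with the general argument. The \emph{main obstacle} is making the monotonicity argument clean at points on the axis itself — for $y=0$ one must check that no point of a $\rho(k,M_1M_2)$-curve lies in the ``bad'' open interval between $a_1$ and $a_2$ (there $f$ takes values strictly between $-L$ and $L$ and is still monotone from each side, but the derivative computation degenerates), and that the hyperbola branches for the two segments open toward opposite ends in a way compatible with equal length; once this is dispatched, the translation reduction and the strict monotonicity on horizontal lines give the result immediately.
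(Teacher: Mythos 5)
Your reduction to a translation is where the argument breaks. From the hypotheses (four collinear points, equal lengths, non-coinciding segments) you may conclude that the \emph{unordered} segment $M_3M_4$ is a nonzero translate of $M_1M_2$, but the $\rho(k,\cdot)$-curve depends on the \emph{ordered} pair of endpoints, since it is defined by the signed quantity $|NM_1|-|NM_2|$. If the two ordered pairs point in opposite directions along the line, your key identity $|NM_3|-|NM_4|=|N'M_1|-|N'M_2|$ acquires a minus sign and the translation argument collapses. Worse, in that case the conclusion itself fails as literally stated: take $M_1=(0,0)$, $M_2=(5,0)$, $M_3=(7,0)$, $M_4=(2,0)$ and $k=1$; then $N=(7/2,\,3\sqrt{2})$ satisfies $|NM_1|=|NM_3|=11/2$ and $|NM_2|=|NM_4|=9/2$, so $N$ lies on both the $\rho(1,M_1M_2)$-curve and the $\rho(1,M_3M_4)$-curve, although the segments $[0,5]$ and $[2,7]$ have equal length and do not coincide. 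Hence no proof in this generality can exist; the proposition has to be read with the additional hypothesis that the two ordered pairs are equally oriented along the line, i.e.\ $(M_3,M_4)=(M_1+t,M_2+t)$ for some $t\neq 0$ --- which is exactly how it is used in Lemma~\ref{lem:2k-1_segments}, where all the segments $\Delta_i\subset m$ can be oriented consistently. You need to state that assumption explicitly instead of hiding it in the phrase ``after relabelling''.

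Under the orientation hypothesis your argument is essentially correct and complete: each horizontal line meets the relevant branch of the hyperbola (or the perpendicular bisector, for $k=0$) in at most one point, and the two curves are horizontal translates of one another by $t\neq 0$, hence disjoint. One small correction there: your claim that points of the axis strictly between the foci ``do not lie on any $\rho(k,\cdot)$-curve'' is false --- the vertex of the branch, at $x=(a_1+a_2+k)/2$, is exactly such a point. This does no harm, because the restriction of $f$ to the axis is still weakly monotone (equal to $\pm L$ outside the segment and linear inside), so the level set $\{f=k\}$ with $|k|<L$ meets the axis in exactly one point and the translation argument applies on the axis as well.
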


\begin{lemma}
	\label{lem:2k-1_segments}
	Let $M \in \mathfrak{M}(2,\mathbb{N})$ and let $m$ be a straight line.
	Then for every $k\in\mathbb{N}$ there are at most $2k-1$ segments $\Delta_i \subset m$ with endpoints in $M$,
	such that $|\Delta_i| = k$.
\end{lemma}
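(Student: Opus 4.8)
The plan is to reduce the count to an injectivity statement built from Proposition~\ref{obs:rho_curves}. Since $M\in\mathfrak{M}(2,\mathbb{N})$ is not contained in a line, I would first fix a point $P\in M$ that does \emph{not} lie on $m$. Then I would orient the line $m$ once and for all, and for each segment $\Delta_i\subset m$ with endpoints in $M$ and $|\Delta_i|=k$ write $\Delta_i=L_iR_i$, with $L_i$ preceding $R_i$ in the chosen orientation. Because $P,L_i,R_i\in M$, both $|PL_i|$ and $|PR_i|$ are integers, and because $P\notin m$ the triangle inequality is \emph{strict}, so
\[
  j_i := |PL_i| - |PR_i| \in \{-(k-1),\,-(k-2),\,\dots,\,k-1\},
\]
a set of exactly $2k-1$ integers. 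It therefore suffices to prove that the assignment $\Delta_i\mapsto j_i$ is injective.

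For the injectivity I would argue by contradiction. By the very definition of a $\rho$-curve, the point $P$ lies on the $\rho(j_i,L_iR_i)$-curve. Suppose $j_i=j_{i'}$ for two distinct segments $\Delta_i\neq\Delta_{i'}$. Both segments lie on the common line $m$ and have equal length $k$, and they do not coincide (distinct pairs of endpoints give distinct segments, even when the segments overlap). Hence Proposition~\ref{obs:rho_curves} applies with the parameter $j_i$ and asserts that the $\rho(j_i,L_iR_i)$-curve and the $\rho(j_i,L_{i'}R_{i'})$-curve are disjoint --- contradicting the fact that $P$ lies on both. So the values $j_i$ are pairwise distinct, which yields at most $2k-1$ segments. (If $M\cap m$ contains fewer than two points the statement is vacuous.)

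The argument is short, so the only places I expect to require care are bookkeeping points rather than a genuine obstacle: one must make sure that a point off $m$ really exists and that it forces the \emph{strict} bound $|j_i|<k$, so the target set has exactly $2k-1$ elements and not $2k+1$; and one must invoke Proposition~\ref{obs:rho_curves} with the orientations chosen consistently, so that the number attached to each segment is exactly the parameter of the $\rho$-curve through $P$ determined by that segment, and so that ``distinct segments on a line do not coincide'' is read as the hypothesis the proposition actually needs (overlapping segments still count as non-coinciding). I anticipate this last matching of orientations to be the single spot where a sloppy write-up could go wrong. This also makes transparent why the bound generalizes the earlier $k=1$ statement of \cite[lemma 3]{our-vmmsh-2018}.
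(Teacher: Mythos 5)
Your proof is correct and follows essentially the same route as the paper: pick a point of $M$ off the line $m$, attach to each length-$k$ segment the integer difference of distances from that point to its endpoints, observe that this integer is one of the $2k-1$ values strictly between $-k$ and $k$, and use Proposition~\ref{obs:rho_curves} to conclude the assignment is injective. Your write-up is only more explicit than the paper's about the orientation convention and the strictness of the triangle inequality.
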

\begin{proof}
	Consider a point $N\in M \setminus m$.
	Then for each $\Delta_i$ there is a $\rho(n_i,\Delta_i)$-curve containing $N$.
	Due to Proposition~\ref{obs:rho_curves}, all $n_i$ are distinct;
	otherwise the $\rho(n_i,\Delta_i)$-curve and the $\rho(n_i,\Delta_j)$-curve, $j\neq i$, do not intersect.
	There can be only $2k-1$ distinct values for $n_i$,
	so there are at most $2k-1$ distinct segments $\Delta_i$.
\end{proof}

\begin{lemma}
	\label{lem:line_segment_with n_squared_plus_one_points}
	Let $\Delta$ be a straight line segment, $|\Delta|=l$ and $M \in \mathfrak{M}(2,\mathbb{N})$.
	Let $\#(\Delta \cap M) = n^2 + 1$.
	Then
	\begin{equation}
		l \geq \frac{2}{3}n^3+\frac{1}{2}n^2-\frac{1}{6}n
		.
	\end{equation}
\end{lemma}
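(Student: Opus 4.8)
The plan is to reduce the statement to a purely one-dimensional counting problem about the $n^2+1$ collinear points and the integer distances between them. First I would set up coordinates: since the $n^2+1$ points of $\Delta\cap M$ lie on the line $m$ containing $\Delta$, order them by position as $a_0<a_1<\dots<a_{n^2}$. Because $M$ is an integral point set, all mutual distances are positive integers, so after a translation we may assume $a_0=0$ and $a_i\in\mathbb{N}$ for $i\geq 1$. The two extreme points of $\Delta\cap M$ lie in $\Delta$, hence $l\geq a_{n^2}=\sum_{i=1}^{n^2} d_i$, where $d_i=a_i-a_{i-1}\in\mathbb{N}$ are the consecutive gaps. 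So it suffices to bound $\sum_i d_i$ from below.

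The key observation is that each gap $d_i$ is itself a segment on $m$ with endpoints in $M$, so Lemma~\ref{lem:2k-1_segments} applies to the gaps: for every $k\in\mathbb{N}$, at most $2k-1$ of the gaps $d_i$ are equal to $k$. Summing over $k'\leq k$, the number of gaps with $d_i\leq k$ is at most $\sum_{k'=1}^{k}(2k'-1)=k^2$. Consequently, sorting the gaps as $d_{(1)}\leq d_{(2)}\leq\dots\leq d_{(n^2)}$, one has $d_{(s)}\geq\lceil\sqrt{s}\,\rceil$ for every $s$: if $d_{(s)}=k$, there would be at least $s$ gaps of size $\leq k$, forcing $s\leq k^2$ and hence $k\geq\lceil\sqrt{s}\,\rceil$ since $k$ is an integer.

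It then remains to evaluate $\sum_{s=1}^{n^2}\lceil\sqrt{s}\,\rceil$. Grouping the indices $s$ into the blocks $(m-1)^2<s\leq m^2$ for $m=1,\dots,n$, each block contributes $2m-1$ terms each equal to $m$, so
\[
\sum_{s=1}^{n^2}\lceil\sqrt{s}\,\rceil=\sum_{m=1}^{n} m(2m-1)=\frac{1}{3}n(n+1)(2n+1)-\frac12 n(n+1)=\frac{2}{3}n^3+\frac{1}{2}n^2-\frac{1}{6}n,
\]
which is exactly the claimed bound. Combining, $l\geq a_{n^2}=\sum_{s} d_{(s)}\geq\sum_{s}\lceil\sqrt{s}\,\rceil=\frac{2}{3}n^3+\frac{1}{2}n^2-\frac{1}{6}n$.

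There is no serious obstacle once Lemma~\ref{lem:2k-1_segments} is available; the only step requiring a little care is the passage from ``at most $2k-1$ segments of length $k$'' to ``at most $k^2$ gaps of size at most $k$'' and the resulting pointwise estimate $d_{(s)}\geq\lceil\sqrt{s}\,\rceil$ on the sorted gaps, after which the bound follows from a routine closed-form summation. I would also note in passing that the inequality is sharp already for small $n$ (for $n=2$, the gap pattern $1,2,2,2$ gives a collinear configuration attaining $l=7$), which confirms that no constant is lost in the argument.
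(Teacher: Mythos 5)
Your proposal is correct and follows essentially the same route as the paper's own proof: partition $\Delta$ into the $n^2$ consecutive gaps, apply Lemma~\ref{lem:2k-1_segments} to conclude that at most $2k-1$ gaps have length $k$, and sum $\sum_{k=1}^n k(2k-1)$. You merely make explicit the sorting/rearrangement step that the paper leaves implicit behind its two displayed sums.
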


\begin{proof}
	Any $n^2+1$ points, including the endpoints of $\Delta$, partition the segment $\Delta$ into $n^2$
	sequential segments $\Delta_i$.
	Due to Lemma~\ref{lem:2k-1_segments}, there is at most one segment of length 1,
	at most three segments of length 2, etc.
	The following two expressions for sums conclude the proof:
	\begin{equation}
		1 + \sum_{k=1}^n (2k-1) = n^2 + 1
		,
	\end{equation}
	\begin{equation}
		\sum_{k=1}^n k(2k-1) = \frac{2}{3}n^3+\frac{1}{2}n^2-\frac{1}{6}n
		.
	\end{equation}
\end{proof}

Now we will estimate the length of a line segment that intersects an integral point set
by an arbitrary number of points.

\begin{lemma}
	Let $\Delta$ be a straight line segment, $|\Delta| = b$ and $M \in \mathfrak{M}(2,\mathbb{N})$.
	Let $\#(\Delta \cap M) = t$.
	Then
	\begin{equation}
		\label{eq:estimate_for_segment_length}
		b\geq \frac{2}{3}t^{3/2}-\frac{3}{2}t+\frac{5}{6}t^{1/2}
		.
	\end{equation}
\end{lemma}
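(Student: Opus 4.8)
The plan is to reduce the general case $t$ to the case $\#(\Delta\cap M)=n^2+1$ already handled in Lemma~\ref{lem:line_segment_with n_squared_plus_one_points}, and then optimise. Given $t$, I would set $n=\lceil\sqrt{t-1}\,\rceil$, so that $n^2+1\geq t$ and $(n-1)^2+1<t$, i.e. $n-1<\sqrt{t-1}\leq n$. Since $M$ has at least $t$ points on $\Delta$, I can choose a sub-segment $\Delta'\subseteq\Delta$ whose endpoints lie in $M$ and which contains at least $t$ of these points; but more to the point, the whole of $\Delta$ already contains $t\geq(n-1)^2+2$ points of $M$, hence in particular more than $(n-1)^2+1$ points, and the monotone estimate from the previous lemma applied with $n$ replaced by $n-1$ gives $b\geq \frac{2}{3}(n-1)^3+\frac{1}{2}(n-1)^2-\frac{1}{6}(n-1)$. (The cleaner route: $\Delta$ contains a sub-segment with endpoints in $M$ carrying exactly $(n-1)^2+1$ of the points when $t\ge (n-1)^2+1$, and length is monotone under taking sub-segments, so Lemma~\ref{lem:line_segment_with n_squared_plus_one_points} applies verbatim.)

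Next I would substitute the bound $n-1\geq\sqrt{t-1}-1\geq\sqrt{t}-\text{(small)}$ — more carefully, using $n\geq\sqrt{t-1}$ so $n-1\geq\sqrt{t-1}-1$ — into the cubic $\frac{2}{3}x^3+\frac{1}{2}x^2-\frac{1}{6}x$, which is increasing for $x\geq 0$. Expanding $\frac{2}{3}(\sqrt{t-1}-1)^3+\frac{1}{2}(\sqrt{t-1}-1)^2-\frac{1}{6}(\sqrt{t-1}-1)$ and collecting powers of $t^{1/2}$ should produce the leading term $\frac{2}{3}t^{3/2}$, a linear term with coefficient $-\frac{3}{2}$ (the $-2$ from cubing against the $+\frac12\cdot t$ and lower corrections combine to $-\frac32 t$), and a $t^{1/2}$ term with coefficient $+\frac{5}{6}$, with the remaining constant and negative lower-order pieces only helping the inequality. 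This is the routine-but-delicate computation: one must check that replacing $t-1$ by $t$ inside the various powers, together with discarding the constant terms, still leaves the claimed coefficients and does not flip a sign.

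The main obstacle is precisely this bookkeeping at the level of the lower-order terms: the exponent $\lceil\sqrt{t-1}\,\rceil$ can be as much as one larger than $\sqrt{t-1}$, so one loses almost a full unit in $n$, and one has to verify that the stated coefficients $-\frac32$ and $+\frac56$ are chosen to absorb exactly this worst-case loss for every integer $t\geq 1$ (not merely asymptotically). I would handle the small values of $t$ (say $t\leq$ a dozen, where $\sqrt{t}$ behaves badly) by direct inspection — for $t=1$ the right-hand side of \eqref{eq:estimate_for_segment_length} is $\frac23-\frac32+\frac56=0\leq b$, which is trivially true, and similarly the bound is easily seen to be non-positive or small for the first few $t$ — and treat $t$ beyond that threshold by the monotone cubic estimate above. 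A secondary point to watch: the reduction needs $M\in\mathfrak M(2,\mathbb N)$ so that Lemma~\ref{lem:2k-1_segments} (hence Lemma~\ref{lem:line_segment_with n_squared_plus_one_points}) is available, and indeed this hypothesis is in force, so no integral-point-set axiom is lost in passing to the sub-segment.
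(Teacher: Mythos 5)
Your reduction to Lemma~\ref{lem:line_segment_with n_squared_plus_one_points} via monotonicity of the minimal length is exactly the paper's strategy, but the final substitution --- the ``bookkeeping'' you defer --- is precisely where the argument fails. Writing $g(x)=\frac23 x^3+\frac12 x^2-\frac16 x$, the whole point of the stated bound is the exact identity $g(\sqrt t-1)=\frac23 t^{3/2}-\frac32 t+\frac56 t^{1/2}$ (the constant term cancels to $0$), so the right-hand side of \eqref{eq:estimate_for_segment_length} is $g$ evaluated at exactly $\sqrt t-1$, with no slack at all. Your route evaluates $g$ at $\sqrt{t-1}-1$ instead; since $g$ is increasing and $\sqrt{t-1}-1<\sqrt t-1$, you get $g(\sqrt{t-1}-1)<g(\sqrt t-1)$, and the deficit is about $g'(\sqrt t-1)\cdot(\sqrt t-\sqrt{t-1})\approx\sqrt t$, which swamps the $\frac56 t^{1/2}$ term rather than being absorbed by the lower-order coefficients. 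Matters are worse when $t-1$ is a perfect square: there $\lceil\sqrt{t-1}\,\rceil-1=\sqrt{t-1}-1$ exactly, e.g.\ for $t=5$ your choice gives $n=2$, $n-1=1$, hence only $b\geq g(1)=1$, while the claimed bound is $\approx 1.82$.

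The missing ingredient is an integrality observation. Take $m=\lfloor\sqrt{t-1}\rfloor$, the largest integer with $m^2+1\leq t$. Then $(m+1)^2>t-1$, and since $(m+1)^2$ and $t$ are integers, $(m+1)^2\geq t$, i.e.\ $m\geq\sqrt t-1$. Hence $b\geq f(m^2+1)\geq g(m)\geq g(\sqrt t-1)$, which is exactly the claim (for $t=1$ the right-hand side is $0$ and the statement is trivial). This is what the paper's proof does, phrased as ``$(\sqrt t-1)^2+1\leq t$ for $t\in\mathbb N$'' followed by $f(t)\geq f((\sqrt t-1)^2+1)$. Your ceiling-minus-one index coincides with $\lfloor\sqrt{t-1}\rfloor$ except in the perfect-square case, so the fix is small, but the lower bound $n-1\geq\sqrt{t-1}-1$ that you actually feed into the cubic throws away precisely the amount needed, and no choice of ``absorbing'' the loss into the stated coefficients can rescue it.
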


\begin{proof}
	Let $f(k)$ denote the mininal length of a line segment
	that intersects an integral point set by $k$ points.
	We observe that $f(k) > f (k-1)$.
	Due to Lemma~\ref{lem:line_segment_with n_squared_plus_one_points},
	$f(n^2+1) \geq \frac{2}{3}n^3+\frac{1}{2}n^2-\frac{1}{6}n$.

	For $t\in\mathbb{N}$ the inequality $(\sqrt{t} - 1)^2 +1 \leq t$ holds,
	thus
	\begin{equation}
		\label{eq:line_segment_length}
		f(t) \geq f((\sqrt{t} - 1)^2 +1) \geq \frac{2}{3}(\sqrt{t} - 1)^3+\frac{1}{2}(\sqrt{t} - 1)^2-\frac{1}{6}(\sqrt{t} - 1)
		=
		\frac{2}{3}t^{3/2}-\frac{3}{2}t+\frac{5 }{6}t^{1/2}
		.
	\end{equation}
\end{proof}

This lemma leads to the following proposition.

\begin{proposition}
	\label{obs:estimate_points_on_straight_line}
	Let $\Delta$ be a straight line segment, $|\Delta| = b$ and $M \in \mathfrak{M}(2,\mathbb{N})$.
	Let $\#(\Delta \cap M) = k$ and $b>10000$.
	Then $k \leq \gamma_2 b + 6$,
	where
	\begin{equation}
		\gamma_2 = \frac{3846}{2593 \sqrt{647}-5823} = 0.063958...
	\end{equation}
\end{proposition}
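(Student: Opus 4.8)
The plan is to start from the inequality \eqref{eq:estimate_for_segment_length} of the previous lemma, which with $t = k$ and $|\Delta| = b$ reads $b \geq \frac{2}{3}k^{3/2} - \frac{3}{2}k + \frac{5}{6}k^{1/2}$, and to invert it into an upper bound on $k$ in terms of $b$. Since the right-hand side is dominated by the $\frac{2}{3}k^{3/2}$ term, the bound $k \leq \gamma_2 b + 6$ with $\gamma_2 \approx 0.064$ should follow once $b$ is large enough (here $b > 10000$). The ``$+6$'' slack is there precisely to absorb the lower-order terms $-\frac{3}{2}k$ and $+\frac{5}{6}k^{1/2}$ and to make the algebra close with a clean constant.

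First I would reduce to a single-variable estimate: set $s = k^{1/2}$, so the lemma gives $b \geq \frac{2}{3}s^3 - \frac{3}{2}s^2 + \frac{5}{6}s = g(s)$. I want to show that $k = s^2 \leq \gamma_2 b + 6$, i.e. $s^2 \leq \gamma_2 g(s) + 6$ is implied by $g(s) > 10000$ (equivalently $b > 10000$ forces $s$ past some threshold $s_0$, and for $s \geq s_0$ the desired inequality holds). So the core claim is a one-variable polynomial inequality: for $s \geq s_0$,
\begin{equation*}
	\gamma_2\left(\tfrac{2}{3}s^3 - \tfrac{3}{2}s^2 + \tfrac{5}{6}s\right) + 6 - s^2 \geq 0 .
\end{equation*}
The leading term is $\frac{2}{3}\gamma_2 s^3 > 0$, so this holds for all large $s$; one checks it holds from $s_0$ onward, where $s_0$ is (slightly above) the value making $g(s_0) = 10000$. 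The specific value of $\gamma_2$ is chosen as the solution of the boundary case: it is the constant for which the cubic $\frac{2}{3}\gamma_2 s^3 - (1 + \frac{3}{2}\gamma_2)s^2 + \frac{5}{6}\gamma_2 s + 6$ has a (double) root exactly at $s_0 = \sqrt{k_0}$, where $k_0$ is the largest number of points forced into a segment of length $10000$; the messy closed form $\gamma_2 = \frac{3846}{2593\sqrt{647} - 5823}$ is what pops out of that root condition, with $647$ presumably being $k_0$ or something close to it (note $(\sqrt{647}-1)^2 + 1 = 649 - 2\sqrt{647}$ and $f$ evaluated there is about $10000$).

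Concretely, I would: (1) from $b > 10000$ and monotonicity of $f$, extract the threshold $k_0$ such that $f(k_0+1) > 10000 \geq f(k_0)$, giving a hard cap interplay; (2) observe $g$ is increasing for $s$ in the relevant range (check $g'(s) = 2s^2 - 3s + \frac{5}{6} > 0$ there), so $b \geq g(k^{1/2})$ and $b > 10000$ together pin $k$ below a value determined by the tangent line to the curve $b \mapsto$ (inverse of $g$); (3) verify the polynomial inequality above at the single critical point $s_0$ and confirm convexity/sign of the derivative so that it propagates to all $s \geq s_0$. The main obstacle is purely computational: identifying the exact breakpoint $k_0$ (equivalently $s_0$) from the condition $b = 10000$ and then certifying that the affine bound $\gamma_2 b + 6$ lies above the true inverse of the cubic $g$ for every $b > 10000$ — this amounts to showing a certain cubic in $s$ is nonnegative on a half-line, which I would do by exhibiting it as $\frac{2}{3}\gamma_2(s - s_0)^2(s - s_1)$ for an appropriate $s_1 \leq s_0$ (or checking the discriminant), rather than by any conceptually new idea. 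No subtlety beyond bookkeeping with the constants is expected.
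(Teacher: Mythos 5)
Your proposal follows essentially the same route as the paper: invert the cubic lower bound of the preceding lemma into the linear bound $k \leq \gamma_2 b + 6$, with $\gamma_2$ determined by forcing equality at the threshold $t = 647$ (note $6\cdot 641 = 3846$ and $f(647)\geq\frac{2593\sqrt{647}-5823}{6}$), the inequality for larger $t$ then following because the cubic side grows faster; the substitution $s=\sqrt{t}$ is cosmetic. One small correction: $s_0=\sqrt{647}$ is a simple root of your cubic, not a double one (the derivative there is positive), so the factorization $\frac{2}{3}\gamma_2(s-s_0)^2(s-s_1)$ is not available --- but your fallback of checking the sign of the derivative past $s_0$ is exactly the paper's ``left-hand side grows faster'' argument.
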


\begin{proof}
	We know the maximum number of points for planar integral point sets of diameters at most $10000$.
	Thus, we are interested in integral $t$ such that estimate~\eqref{eq:estimate_for_segment_length}
	holds for $b > 10000$.
	So, let us consider $t\geq 647$ and find a coefficient $\gamma_2$,
	such that the inequality
	\begin{equation}
		\label{eq:inequality_for_linear_estimate_of_segment_length}
		\frac{2}{3}t^{3/2}-\frac{3}{2}t+\frac{5 }{6}t^{1/2} \geq \frac{t-6}{\gamma_2}
	\end{equation}
	holds for all $t\geq 647$.
	For such $t$,
	the left-hand side of~\eqref{eq:inequality_for_linear_estimate_of_segment_length} obviously grows faster than the right-hand side.
	Turning~\eqref{eq:inequality_for_linear_estimate_of_segment_length} into the same equality
	and solving it for $t=647$, we obtain the required estimate.
\end{proof}

Now we are ready to prove the main theorem of the section.

\begin{theorem}
	\label{thm:main_estimate}
	If $n\geq 4$, then $d(2,n) \geq \gamma (n - 2)$,
	where
	\begin{multline}
		0.46530... =
		3^{1/4} \cdot 2^{-3/2} >
		\\ >
		\gamma = \frac{\sqrt{16 {{\left( \sqrt{\frac{2}{\sqrt{3}}+\frac{1}{21490}}+\frac{1}{\sqrt{21490}}\right) }^{2}}+\frac{14791716}{{{\left( 2593 \sqrt{647}-5823\right) }^{2}}}}-\frac{3846}{2593 \sqrt{647}-5823}}{8 {{\left( \sqrt{\frac{2}{\sqrt{3}}+\frac{1}{21490}}+\frac{1}{\sqrt{21490}}\right) }^{2}}}
		>\\
		> 0.45557
		> \frac{5}{11}
		.
	\end{multline}
\end{theorem}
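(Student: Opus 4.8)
The plan is to combine a packing-density argument with the linear bound on collinear points. Let $M\in\mathfrak{M}(2,n)$ with $\operatorname{diam} M = d$. By Proposition \ref{obs:4_leq_n_leq_21491} we may assume $n\geq 21492$, so $d>10000$ as well. By Lemma \ref{lem:square_container}, $M$ lies in a square of side $d$; after rescaling by $1/d$ the set $M$ becomes $n$ points in the unit square $U$. Now I would split $M$ according to how the points distribute among lines. The key dichotomy: either some line $\ell$ meets $M$ in ``many'' points — say at least $k_0$ points for a threshold $k_0$ to be chosen — or no line is that rich.

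First I would handle the sparse case. If every line meets $M$ in at most $k_0-1$ points, then in particular no two points of $M$ coincide and, more usefully, I claim we can delete at most a bounded number of points to get a subset whose pairwise (rescaled) distances are all at least something comparable to the minimum integer distance $1/d$. Actually the cleaner route: the $n$ rescaled points sit in $U$ with minimum pairwise distance at least $1/d$ (distances in $M$ are integers $\geq 1$), so by the definition of the PPS coefficient, $1/d \leq \varphi_n$, hence $d \geq 1/\varphi_n$. But $1/\varphi_n$ is only of order $\sqrt n$, far weaker than linear — so the raw packing bound alone is useless. The point of the combined approach must therefore be different: one uses PPS not on all of $M$ but to bound how many points can lie \emph{off} a rich line, while the rich line itself is controlled by Proposition \ref{obs:estimate_points_on_straight_line}.

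So the real argument I would run is: suppose $n \geq 21492$ and let $\ell$ be a line meeting $M$ in $k = \#(\ell\cap M)$ points, chosen to maximize $k$. The $k$ collinear points lie on a segment $\Delta \subseteq \ell$ of length $|\Delta| = b \leq d$, so by Proposition \ref{obs:estimate_points_on_straight_line}, $k \leq \gamma_2 b + 6 \leq \gamma_2 d + 6$. For the remaining $n - k$ points: project or argue that these, together with the structure forced by integrality (using the cross lemma \ref{lem:intervals_cross}, Lemma \ref{lemma:quadr_diag_edges}, and Proposition \ref{obs:rho_curves} to prevent too many points from clustering), must be spread out in the unit square so that $n - k \lesssim \beta^{-1}\sqrt{d}\cdot(\text{something})$ — no, again that's only $\sqrt d$. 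The correct balance, matching the numerology in the statement (note $\beta$ from Proposition \ref{varphi_n_where_n_geq_21492} and $\gamma_2$ from Proposition \ref{obs:estimate_points_on_straight_line} both appear in $\gamma$), must be: the off-line points number at most $n - k$, and since $M$ has diameter $d$, Lemma \ref{lem:square_container} plus $\varphi_{n-1}\leq \beta/\sqrt{n-2}$ controls them — one shows that after removing the richest line, the minimum distance condition forces $d \geq$ (something)$\cdot(n-2)$ by feeding $n - k \leq n - 1$ points back into the PPS bound at scale $d$. Writing $1/d \leq \varphi_{n-1} \leq \beta/\sqrt{n-2}$ gives $\sqrt{n-2}\leq \beta d$; simultaneously $k \leq \gamma_2 d + 6$ and $n \leq k + (\text{off-line count})$ where the off-line count is itself $\leq \gamma_2 d + 6$ per line across $\leq$ boundedly many lines... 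I would set up the two inequalities $n - 2 \leq \beta^2 d^2$ is the wrong order; instead the genuine mechanism pairs a \emph{linear} count from the rich line with a count of the rich lines themselves.

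Concretely, then, the proof I would write: assume $n\geq 21492$. Let $t = \#(\Delta\cap M)$ be maximized over all segments $\Delta$ with endpoints in $M$; all such $\Delta$ have length $\leq d$. The points of $M$ lie on lines; a counting argument (each point not on the richest line $\ell_0$ determines, with the $\geq 2$ points of $\ell_0$ nearest it, constraints via the cross and $\rho$-curve lemmas) bounds the total as $n \leq C_1 d + C_2$ where $C_1$ is built from $\gamma_2$. To pin $C_1 = \gamma$ one combines this with $\sqrt{n-2} \leq \beta d$ from Proposition \ref{varphi_n_where_n_geq_21492}: eliminate $d$ between $n - 2 \leq \gamma_2 d\cdot(\text{factor}) $ and $n - 2\leq \beta^2 d^2$, i.e. use $d \geq \sqrt{n-2}/\beta$ to strengthen, then solve the resulting quadratic in $\sqrt{n-2}$, which is exactly where the formula $\gamma = \dfrac{\sqrt{16\beta^{-2\text{-ish}} + 14791716/(2593\sqrt{647}-5823)^2} - 3846/(2593\sqrt{647}-5823)}{8(\cdots)}$ comes from — it is the positive root of a quadratic $A x^2 + B x - (n-2) = 0$ with $A = (2\beta\text{-expression})$, $B = \gamma_2$-expression, evaluated so that $d \geq \gamma(n-2)$. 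Finally, check $\gamma > 5/11$ numerically (and $\gamma < 3^{1/4}2^{-3/2}$, consistent with Proposition \ref{obs:4_leq_n_leq_21491} which covers the small cases with the better constant).

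The main obstacle — and the step I would spend the most care on — is the off-line counting: showing rigorously that the number of points of $M$ not on the densest line is itself bounded by a linear function of $d$ with the right constant, using only Lemmas \ref{lem:intervals_cross}, \ref{lemma:quadr_diag_edges} and Proposition \ref{obs:rho_curves}. Everything after that is the algebra of solving one quadratic and verifying two numerical inequalities.
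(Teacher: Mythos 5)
Your proposal assembles the right ingredients (Proposition~\ref{obs:4_leq_n_leq_21491} for small $n$, Lemma~\ref{lem:square_container}, the PPS bound $\varphi_{n-1}\leq\beta/\sqrt{n-2}$, the constant $\gamma_2$, and a final quadratic in $b/(n-2)$), and you correctly observe that the raw packing bound $1/d\leq\varphi_n$ only yields $d\gtrsim\sqrt{n}$. But you never supply the mechanism that upgrades this to a linear bound, and the ``rich line'' dichotomy you substitute for it is both undeveloped and not what makes the argument work. The actual mechanism is an Erd\H{o}s--Anning intersection count anchored at the \emph{two shortest segments} of $M$: choose $M_1,M_2,M_3,M_4$ realizing the two minimal distances, so that $|M_1M_2|,|M_3M_4|\leq b\varphi_{n-1}$ by the PPS bound applied inside the side-$b$ square; Lemma~\ref{lemma:quadr_diag_edges} shows the open segments $M_1M_2$ and $M_3M_4$ cannot cross; every point $N\in M$ lies on one of the $|M_1M_2|-1$ hyperbolas $|NM_1|-|NM_2|=k$ or on $cr(M_1,M_2)$, and simultaneously on one of the $|M_3M_4|-1$ hyperbolas for $M_3M_4$ or on $cr(M_3,M_4)$. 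Two such curves meet in at most $4$ points unless the two crosses share a line (Lemma~\ref{lem:intervals_cross}), and in that degenerate case Proposition~\ref{obs:estimate_points_on_straight_line} caps the count at $\gamma_2 b+6$. Summing gives
\begin{equation*}
	n \leq 4b^2\varphi_{n-1}^2 - 4 + \gamma_2 b + 6
	,
\end{equation*}
and only now does $\varphi_{n-1}\leq\beta/\sqrt{n-2}$ enter, producing $n-2\leq 4\beta^2 b^2/(n-2)+\gamma_2 b$ and hence the quadratic $4\beta^2\lambda^2+\gamma_2\lambda-1\geq 0$ in $\lambda=b/(n-2)$ whose positive root is exactly the stated $\gamma$.

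The concrete gap, then, is the step you yourself flag as ``the main obstacle'': you have no argument that the points off a densest line number at most linearly many in $d$, and in fact no such statement is needed or used. The quadratic term $4(|M_1M_2|-1)(|M_3M_4|-1)\leq 4b^2\varphi_{n-1}^2$ coming from hyperbola--hyperbola intersections carries the main weight, and $\gamma_2$ enters not through a richest line of $M$ but through the single line that $cr(M_1,M_2)\cap cr(M_3,M_4)$ may degenerate to when $M_1,M_2,M_3,M_4$ are collinear. Without the two-minimal-pairs hyperbola count, your outline does not close.
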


\begin{proof}
	For $4 \leq n \leq 21491$, the assertion of the theorem follows immediately from Proposition~\ref{obs:4_leq_n_leq_21491}.
	Let us consider $M\in \mathfrak{M}(2,n)$, $n \geq 21492$, $\operatorname{diam} M = b$.
	Lemma~\ref{lem:square_container} yields that $M$ is situated in a square of side length $b$.
	Let $M_1, M_2, M_3, M_4$ be points of $M$ such that the distances $|M_1 M_2|$ and $|M_3 M_4|$
	are minimal in $M$ ($M_2$ and $M_3$ may coincide).
	Then $|M_1 M_2| \leq b\varphi_{n-1}$ and $|M_3 M_4| \leq b\varphi_{n-1}$.
	Due to Lemma~\ref{lemma:quadr_diag_edges}, open line segments $M_1 M_2$ and $M_3 M_4$ do not intersect
	(otherwise they are not minimal).

	Let $C = cr(M_1 M_2) \cap cr(M_3 M_4)$.
	Each point $N\in M$ satisfies one of the following conditions:

	a) $N$ belongs to $C$~--- overall at most $\gamma_2 b + 6$ points by Lemma~\ref{lem:intervals_cross} and Proposition~\ref{obs:estimate_points_on_straight_line};

	b) $N$ belongs to the intersection of one of $|M_1 M_2| - 1$ hyperbolas
	with one of $|M_3 M_4| - 1$ hyperbolas~--- overall at most $4 (|M_1 M_2| - 1)(|M_3 M_4| - 1)$ points;

	c) $N$ belongs to the intersection of one of $|M_1 M_2| - 1$ hyperbolas with $cr(M_3 M_4)$~---
	overall at most $4 (|M_1 M_2| - 1)$ points;

	d) $N$ belongs to the intersection of one of $|M_3 M_4| - 1$ hyperbolas with $cr(M_1 M_2)$~---
	overall at most $4 (|M_3 M_4| - 1)$ points;

	(see~\cite{erdos1945integral} for details).
	Summing up the above cases, we obtain the following estimate:
	\begin{equation}
		\label{eq:estimate_n_varphi_n_squared}
		n \leq 4 b^2 \varphi_{n-1}^2 - 4 + \gamma_2 b + 6
		.
	\end{equation}
	Proposition~\ref{varphi_n_where_n_geq_21492} turns estimate~\eqref{eq:estimate_n_varphi_n_squared}
	into the following:
	\begin{equation}
		\label{eq:estimate_n_sqrt_n-2}
		n-2 \leq 4 b^2 \frac{\beta^2}{n-2} + \gamma_2 b
		,
	\end{equation}
	which obviously leads to inequality
	\begin{equation}
		1 \leq 4\beta^2 \left(\frac{b}{n-2}\right)^2  + \gamma_2 \frac{b}{n-2}
		.
	\end{equation}
	Let us denote $\lambda = b/(n-2)$ and solve the following quadratic inequality for $\lambda$:
	\begin{equation}
		\label{eq:square_inequality_lambda}
		4\beta^2 \lambda^2  + \gamma_2 \lambda - 1 \geq 0
		.
	\end{equation}
	The discriminant is $\gamma_2^2 + 16 \cdot \beta^2$,
	so we obtain the following estimate:
	\begin{equation}
		\frac{b}{n-2} = \lambda \geq \frac{-\gamma_2 + \sqrt{\gamma_2^2 + 16 \cdot \beta^2}}{8\beta^2}
		.
	\end{equation}
	Calculating the expression above for our $\beta$ and $\gamma_2$, we conclude the proof.
\end{proof}

\begin{corollary}
	If $n\geq 4$, then $d(2,n) > \frac{5}{11} n$.
\end{corollary}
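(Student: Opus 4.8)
The plan is to deduce the corollary from Theorem~\ref{thm:main_estimate} together with Proposition~\ref{obs:4_leq_n_leq_21491}, splitting the range of $n$ at the same threshold $21491/21492$ that appears in the theorem's proof. The subtlety is that the estimate $\gamma > 5/11$ from Theorem~\ref{thm:main_estimate} is \emph{not} by itself sufficient: the theorem only gives $d(2,n) \geq \gamma(n-2)$, not $\gamma n$, so the additive loss of $2\gamma$ must be absorbed by the slack $\gamma - 5/11 > 0$, which only happens once $n$ is large enough.

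First I would treat $4 \leq n \leq 21491$. Here Proposition~\ref{obs:4_leq_n_leq_21491} gives $d(2,n) \geq 3^{1/4}\cdot 2^{-3/2}\cdot n$, and since $3^{1/4}\cdot 2^{-3/2} = 0.46530\ldots > 5/11 = 0.45454\ldots$ and $n \geq 4 > 0$, the strict inequality $d(2,n) > \frac{5}{11}n$ follows immediately.

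Next I would treat $n \geq 21492$. By Theorem~\ref{thm:main_estimate}, $d(2,n) \geq \gamma(n-2)$, so it suffices to verify $\gamma(n-2) > \frac{5}{11}n$, equivalently $\left(\gamma - \frac{5}{11}\right)n > 2\gamma$, equivalently $n > \dfrac{2\gamma}{\gamma - 5/11}$. Using the two-sided bound $0.45557 < \gamma < 0.46531$ supplied by Theorem~\ref{thm:main_estimate}, the right-hand side is at most $\dfrac{2\cdot 0.46531}{0.45557 - 5/11} < 909$, which is far below $21492$. Hence the desired inequality holds for every $n \geq 21492$, and combined with the previous case this completes the proof.

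The only thing resembling an obstacle here is bookkeeping: one must remember that the theorem's constant multiplies $(n-2)$ rather than $n$, so it cannot be quoted blindly; pinning down the threshold beyond which the slack $\gamma - 5/11$ dominates the additive term $2\gamma$, and confirming that this threshold lies comfortably inside the range already covered by Proposition~\ref{obs:4_leq_n_leq_21491}, is what makes the two cases dovetail. No new geometric input is required.
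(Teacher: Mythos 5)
Your proposal is correct and follows essentially the same route as the paper: the case $4 \leq n \leq 21491$ is handled by Proposition~\ref{obs:4_leq_n_leq_21491}, and for $n \geq 21492$ the bound $d(2,n) \geq \gamma(n-2)$ with $\gamma > 0.45557$ absorbs the additive loss since the required threshold (around $900$) lies far below $21492$. Your explicit computation of that threshold is just a more detailed version of the paper's one-line verification that $0.45557(n-2) > \frac{5}{11}n$ in this range.
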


\begin{proof}
	For $4 \leq n \leq 21491$, the claim follows from Proposition~\ref{obs:4_leq_n_leq_21491} immediately.
	For $n > 21491$, the inequality $0.45557(n-2) > \frac{5}{11}n$ holds.
\end{proof}

We can improve the constant in Theorem~\ref{thm:main_estimate},
if some new exact values of $d(2,n)$ are found;
however, the obtained constant is bounded by $3^{1/4}\cdot2^{-3/2}$ above.
More precisely, we have the following theorem.

\begin{theorem}
	For every $\varepsilon > 0$ there exists a number $n_0$ such that the inequality
	\begin{equation}
		d(2,n) \geq n\cdot(3^{1/4}\cdot 2^{-3/2} - \varepsilon)
	\end{equation}
	holds for every $n>n_0$,
	that is
	\begin{equation}
		\lim_{n\to\infty} \frac{d(2,n)}{n} \geq 3^{1/4}\cdot 2^{-3/2}
		.
	\end{equation}
\end{theorem}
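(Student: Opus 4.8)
The plan is to rerun the argument of Theorem~\ref{thm:main_estimate} but keep all the auxiliary constants as functions of a growing parameter rather than frozen at the threshold $n=21492$. The point is that every loss in the constant $\gamma$ came from pinning down $\beta$ and $\gamma_2$ using a \emph{fixed} cutoff (the PPS upper bound evaluated at $k=21490$, and the segment-length inequality evaluated at $t=647$); if instead we allow the cutoff to tend to infinity, both $\beta$ and $\gamma_2$ tend to their limiting values, and the resulting lower bound for $b/(n-2)$ tends to $3^{1/4}\cdot 2^{-3/2}$. Since $(n-2)/n \to 1$ as well, this yields the claimed asymptotic lower bound for $d(2,n)/n$.

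Concretely, I would fix $\varepsilon>0$ and proceed as follows. First, from the upper bound in Theorem~\ref{thm:varphi_k_bounds} one gets, for any large $K$, the estimate $\varphi_{n-1}\le \beta_K/\sqrt{n-2}$ valid for all $n\ge K+2$, where
\begin{equation}
	\beta_K = \frac{1}{\sqrt{K}} + \sqrt{\frac{2}{\sqrt 3} + \frac{1}{K}} \longrightarrow \sqrt{\tfrac{2}{\sqrt 3}} \quad (K\to\infty),
\end{equation}
exactly as in Proposition~\ref{varphi_n_where_n_geq_21492} with $21490$ replaced by $K$. Second, from inequality~\eqref{eq:estimate_for_segment_length} one gets, for any large cutoff $T$, a linear bound $k\le \gamma_2(T)\,b + O(1)$ on the number of collinear points on a segment of length $b$, where $\gamma_2(T)\to 0$ as $T\to\infty$ (the cubic growth of the right-hand side of~\eqref{eq:estimate_for_segment_length} forces the best linear slope to zero). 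Third, repeating verbatim the case analysis (a)--(d) in the proof of Theorem~\ref{thm:main_estimate} with these $\beta_K$ and $\gamma_2(T)$ in place of $\beta$ and $\gamma_2$, one obtains the quadratic inequality
\begin{equation}
	4\beta_K^2\,\lambda^2 + \gamma_2(T)\,\lambda - 1 \ge 0, \qquad \lambda = \frac{b}{n-2},
\end{equation}
hence
\begin{equation}
	\frac{d(2,n)}{n-2} \ge \frac{-\gamma_2(T) + \sqrt{\gamma_2(T)^2 + 16\beta_K^2}}{8\beta_K^2}.
\end{equation}
Letting $K,T\to\infty$ the right-hand side tends to $\dfrac{\sqrt{16\cdot\frac{2}{\sqrt 3}}}{8\cdot\frac{2}{\sqrt 3}} = \dfrac{1}{2}\left(\tfrac{2}{\sqrt 3}\right)^{-1/2} = 3^{1/4}\cdot 2^{-3/2}$. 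So choosing $K$ and $T$ large enough that this expression exceeds $3^{1/4}\cdot 2^{-3/2}-\varepsilon/2$, and then choosing $n_0$ large enough that both $n_0\ge K+2$ and $\frac{n-2}{n}\bigl(3^{1/4}\cdot 2^{-3/2}-\varepsilon/2\bigr) \ge 3^{1/4}\cdot 2^{-3/2}-\varepsilon$ for $n>n_0$, gives the theorem; the limit statement is then immediate since $\varepsilon$ was arbitrary.

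The only genuinely new thing to check is that $\gamma_2(T)\to 0$ and that the additive constant in the linear bound (the ``$+6$'' in Proposition~\ref{obs:estimate_points_on_straight_line}, coming from the largest planar integral point set of bounded diameter) stays $O(1)$ as the cutoff moves — it does, because we may always keep the same crude bound of $6$ extra points, or simply absorb a constant into $n_0$. I expect the main obstacle to be purely bookkeeping: verifying cleanly that the case analysis (a)--(d) of Theorem~\ref{thm:main_estimate} goes through unchanged when $\beta$ and $\gamma_2$ are replaced by the $K$- and $T$-dependent versions (in particular that $|M_1M_2|,|M_3M_4|\le b\varphi_{n-1}\le b\beta_K/\sqrt{n-2}$ still feeds correctly into the $4b^2\varphi_{n-1}^2$ term), and then controlling the double limit $K,T\to\infty$ — but since the dependence of the final bound on $\beta_K$ and $\gamma_2(T)$ is continuous and monotone in the relevant ranges, this is routine.
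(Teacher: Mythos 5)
Your proof is correct and is essentially the argument the paper intends (the paper states this theorem without proof, immediately after noting that the constant of Theorem~\ref{thm:main_estimate} is capped at $3^{1/4}\cdot 2^{-3/2}$): you let the two cutoffs tend to infinity so that $\beta_K\to\sqrt{2/\sqrt{3}}$ and $\gamma_2(T)\to 0$, absorb the $T$-dependent but fixed additive constant into $n_0$, and observe that the positive root of the quadratic tends to $1/(2\beta)=3^{1/4}\cdot 2^{-3/2}$ while $(n-2)/n\to 1$. One cosmetic slip only: the right-hand side of~\eqref{eq:estimate_for_segment_length} grows like $t^{3/2}$, not cubically in $t$ (the cubic growth is in $n$ with $t=n^2+1$), but any superlinear growth already forces $\gamma_2(T)\to 0$, so the argument stands.
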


\section{Constructing integral point sets}

\begin{definition}
	A set $M\in\mathfrak{M}(2,n)$ is called \textit{facher}
	if $M$ consists of $n-1$ points on a straight line
	and one point out of the line.
\end{definition}

\begin{definition}
	A set $M\in\mathfrak{M}(m,n)$ is called \textit{optimal}
	if $\operatorname{diam}M=d(m,n)$.
\end{definition}

\begin{definition}
	\cite{kurz2005characteristic}
	A squarefree number $q$ is called the \textit{characteristic} of $M\in\mathfrak{M}(2,\mathbb{N})$,
	if for any points $M_1, M_2, M_3 \in M$ the area of triangle $M_1 M_2 M_3$
	is $p_{1,2,3}\sqrt{q}$ for some rational $p_{1,2,3}$.
\end{definition}
For a given $M\in\mathfrak{M}(2,\mathbb{N})$, the characteristic is determined uniquely.

Facher sets are the simplest planar integral point sets.
It is known that for $9\leq n \leq 122$ all the optimal sets are facher~\cite{kurz2008minimum}.
For every cardinality $n$ and every squarefree number $q$
there exists a facher set $M\in\mathfrak{M}(2,n)$ with characteristic $q$~\cite[Theorem 5]{our-vmmsh-2018}.
In~\cite{antonov2008maximal}, the facher sets of characteristic 1 were investigated; they were called \textit{semi-crabs}.

For every integer $n\geq 3$ Solymosi presented~\cite{solymosi2003note} a construction of a facher integral point set
$M\in\mathfrak{M}(2,n)$
such that equality $|M_1 M_2| = 2$ holds for some $M_1, M_2 \in M$.
The constructed set has both odd and even distances.

Now we improve Solymosi's result.

\begin{construction}
	\label{con:planar_set_with_minimeter_1}
	Let us choose a positive integer $k > 1$ and set
	\begin{equation}
		a = 2^{2^k} - 1
		.
	\end{equation}
	Then
	\begin{equation}
		\label{eq:a_equiv_3_mod_4}
		a \equiv 3 \mod 4
	\end{equation}
	and, moreover,
	\begin{multline}
		a = \left(2^{2^{k-1}}\right)^2 - 1
		=
		\left(2^{2^{k-1}} + 1\right) \left(2^{2^{k-1}} - 1\right)
		=
		\\=
		\left(2^{2^{k-1}} + 1\right) \left(2^{2^{k-2}} + 1\right) \cdot ... \cdot \left(2^{2^1} + 1\right) \left(2^2 - 1\right)
		.
	\end{multline}
	We set $d_j = 2^{2^j} + 1$ for $1 \leq j \leq k-1$.
	Then $d_j \equiv 1 \mod 4$.

	Let $c_J = \prod_{j\in J} d_j$ for every subset of indices $J\subset I = \{1,2,...,k-1\}$
	(and  $c_\varnothing = 1$).
	We obtain
	\begin{equation}
		\label{eq:c_J_equiv_1_mod_4}
		c_J\equiv 1 \mod{4}
	\end{equation}
	and, moreover, $a$ is divisible by $c_J$.

	Statements~\eqref{eq:a_equiv_3_mod_4} and~\eqref{eq:c_J_equiv_1_mod_4} yield $a/c_J \equiv 3 \mod 4$.
	Let $b_J = (c_J - a/c_J)/2$, then $b_J \equiv 1 \mod 2$.
	Let us further take $g_J = (c_J + a/c_J)/2$, then $g_J \equiv 0 \mod 2$.

	Next we define the coordinates of the points as following:
	\begin{equation}
		M_{J\pm} =\left(\pm\frac{b_J}{2}, 0\right)
		,
		~~
		N   =\left(0, \frac{\sqrt{a}}{2}\right)
		.
	\end{equation}
	Then the distances are:
	\begin{multline}
		|N M_{J\pm}|
		=
		\left(\frac{b_J^2}{4} + \frac{a}{4}\right)^{1/2}
		=
		\frac{1}{2}\left(\left(\frac{c_J - a/c_J}{2}\right)^2 + a\right)^{1/2}
		=
		\\=
		\frac{1}{2}\left( \left(\frac{c_J}{2}\right)^2 - \frac{a}{2} + \left(\frac{a}{c_j}\right)^2 + a\right)^{1/2}
		=
		\frac{1}{2}\left( \left(\frac{c_J}{2}\right)^2 + \frac{a}{2} + \left(\frac{a}{c_j}\right)^2    \right)^{1/2}
		=
		\\=
		\frac{1}{2}\left(\left(\frac{c_J + a/c_J}{2}\right)^2\right)^{1/2}
		=
		\frac{1}{2}\left(\frac{c_J + a/c_J}{2}\right)
		=
		\frac{g_J}{2}
		\in\mathbb{N}
		,
	\end{multline}
	\begin{equation}
		|M_{J_1 \pm}  M_{J_2 \pm}|
		=
		\left|\frac{b_{J_1}}{2} \pm \frac{b_{J_2}}{2}\right|
		=
		\left|\frac{b_{J_1} \pm b_{J_2}}{2}\right|
		\in\mathbb{N}
		.
	\end{equation}
	In particular, for $H = \{k-1\}$ we obtain $C_H = 2^{2^{k-1}}+1$ and
	\begin{equation}
		b_H =
		\left( 2^{2^{k-1}}+1 - \frac{a}{2^{2^{k-1}}+1} \right)/2
		=
		\left(2^{2^{k-1}}+1 - \left(  2^{2^{k-1}}-1 \right) \right)/2
		=
		1
		.
	\end{equation}
	Thus, one of the distances is
	\begin{equation}
		|M_{H+}  M_{H-}|
		=
		\left|\frac{b_{H}}{2} - \frac{-b_{H}}{2}\right|
		=
		\left|\frac{1}{2} - \frac{-1}{2}\right|
		= 1
		.
	\end{equation}

	Note that all the points $M_{J\pm}$ are distinct:
	the equality $b_J =  b_K$ implies $J=K$;
	the equality $b_J = -b_K$ implies $c_J = -c_K$ or $c_J = a / c_K$.
	The first case is impossible because both $c_J$ and $c_K$ are positive;
	the second case contradicts~\eqref{eq:a_equiv_3_mod_4} and~\eqref{eq:c_J_equiv_1_mod_4}.

	So, $M = \{ M_{J\pm}, N\}$ is indeed a planar integral point set of $2^k+1$ points,
	and distance 1 occurs in $M$.
	Since $k$ can be taken arbitrary large, it follows that we can construct
	a planar integral point set of arbitrary large cardinality so that the distance 1 occurs in that set.
\end{construction}

\begin{remark}
	\label{rem:planar_set_with_minimeter_1_of_3_points}
	Applying Construction~\ref{con:planar_set_with_minimeter_1} to $k=1$ naively,
	we get $a = 3$, $I=\varnothing$, $b_\varnothing = -1$ and then
	obtain an equilateral triangle of side length 1.
	This triangle is obviously the optimal set in $\mathfrak{M}(2,3)$.
\end{remark}

\begin{remark}
	\label{rem:planar_set_with_minimeter_1_of_4_and_5_points}
	For $k=2$ in Construction~\ref{con:planar_set_with_minimeter_1},
	we obtain the optimal set in $\mathfrak{M}(2,5)$ presented in~\cite[Fig. 1]{harborth1993upper}.
	If we remove one point from it,
	then we get one of the two optimal sets in $\mathfrak{M}(2,4)$.
\end{remark}

\begin{definition}
	\cite{antonov2008maximal}
	A set $M\in \mathfrak{M}(m,n)$ is \textit{maximal},
	if there is no set $M'\in \mathfrak{M}(m,n+1)$
	such that $M \subsetneq M'$.
\end{definition}


Setting $k=1$ in Lemma~\ref{lem:2k-1_segments},
we obtain the following result (which is exactly~\cite[Lemma 3]{our-vmmsh-2018}).
\begin{corollary}
	\label{cor:only_one_distance_1_on_straight_line}
	Let $M \in \mathfrak{M}(2,\mathbb{N})$ and let $m$ be a straight line.
	Then there is at most one pair of points $M_1,M_2\in M \cap m$
	such that $|M_1 M_2| = 1$.
\end{corollary}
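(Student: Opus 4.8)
The plan is to derive this corollary as the instance $k=1$ of Lemma~\ref{lem:2k-1_segments}, so essentially no new argument is required. The one thing to observe is the dictionary between the two formulations: a segment $\Delta \subset m$ with endpoints in $M$ and $|\Delta| = 1$ is exactly a pair of points of $M \cap m$ at distance $1$, and distinct such pairs give distinct segments. Hence ``there is at most one pair $M_1, M_2 \in M \cap m$ with $|M_1 M_2| = 1$'' is literally the $k=1$ case of ``there are at most $2k-1$ segments $\Delta_i \subset m$ with endpoints in $M$ and $|\Delta_i| = k$'', since $2\cdot 1 - 1 = 1$.

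For self-containedness I would also recall, in one line, why the bound specializes to $1$: since $M \in \mathfrak{M}(2,\mathbb{N})$ is not collinear, fix $N \in M \setminus m$; for any unit segment $M_1 M_2 \subset m$ with endpoints in $M$, the integers $|N M_1|$ and $|N M_2|$ differ by an integer of absolute value strictly less than $|M_1 M_2| = 1$ (the inequality is strict because $N \notin m$), hence by $0$, so $N$ lies on the perpendicular bisector of $M_1 M_2$, i.e. on its $\rho(0, M_1 M_2)$-curve; two distinct unit segments on $m$ would, by Proposition~\ref{obs:rho_curves}, have disjoint $\rho(0,\cdot)$-curves, contradicting that $N$ lies on both.

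There is no real obstacle here: the statement is a direct specialization, and the proof can be kept to a single sentence citing Lemma~\ref{lem:2k-1_segments}, with the optional one-line reminder above. The only point worth double-checking is the strict inequality $\bigl||NM_1|-|NM_2|\bigr| < |M_1M_2|$, which is immediate from $N$ lying off the line $m$ (equality in the triangle inequality would force $N$ onto $m$).
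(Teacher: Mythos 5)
Your proposal is correct and matches the paper exactly: the paper obtains this corollary precisely by setting $k=1$ in Lemma~\ref{lem:2k-1_segments}, with no further argument. Your optional one-line unpacking (via the $\rho(0,\cdot)$-curve and Proposition~\ref{obs:rho_curves}) is a faithful specialization of that lemma's proof and is a harmless addition.
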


In order to describe all the sets $M \in \mathfrak{M}(2,n)$ with distance 1,
we need~\cite[Proposition 6]{our-vmmsh-2018}.
For the reader's convenience, we will state a slightly rephrased version of it and provide the proof.
\begin{lemma}
	\label{lem:no_4_points_in_semigeneral_position_with_distance_1}
	Let $M=\{M_1,M_2,M_3,M_4\}\in \mathfrak{M}(2,4)$ such that $|M_1 M_2|=1$.
	If $l$ is the straight line through points $M_1$ and $M_2$,
	then one of the points $M_3$ or $M_4$ belongs to $l$.
\end{lemma}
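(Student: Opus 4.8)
The plan is to argue by contradiction: assume that \emph{neither} $M_3$ nor $M_4$ lies on $l$, and show that then $|M_3M_4|$ cannot be an integer. I would begin by fixing coordinates so that $M_1=(0,0)$ and $M_2=(1,0)$; then $l$ is the $x$-axis. The first key step is to locate $M_3$ and $M_4$. For $i\in\{3,4\}$ the points $M_1,M_2,M_i$ are not collinear (as $M_i\notin l$), so the triangle inequality is \emph{strict}, which gives $\big|\,|M_1M_i|-|M_2M_i|\,\big|<|M_1M_2|=1$. Since $|M_1M_i|$ and $|M_2M_i|$ are positive integers, an integer of absolute value less than $1$ is $0$, so $|M_1M_i|=|M_2M_i|$; hence $M_i$ lies on the perpendicular bisector of $M_1M_2$. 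Therefore $M_3=(\tfrac12,y_3)$ and $M_4=(\tfrac12,y_4)$ with $y_3,y_4\neq 0$.

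Next I would extract an irrationality obstruction. Writing $a=|M_1M_3|\in\mathbb N$ (positive since the points are distinct, so $a\geq 1$), the relation $a^2=\tfrac14+y_3^2$ gives $y_3=\pm\tfrac12\sqrt{4a^2-1}$; and $4a^2-1$ is never a perfect square, because $(2a-k)(2a+k)=1$ has no solution with $a\geq 1$. Hence $y_3$ is irrational, and likewise $y_4=\pm\tfrac12\sqrt{4c^2-1}$ is irrational, where $c=|M_1M_4|\in\mathbb N$. Now, since $M_3$ and $M_4$ share the same first coordinate, $|M_3M_4|=|y_3-y_4|$; call this value $e$, a \emph{positive} integer since $M_3\neq M_4$. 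Then $y_4=y_3+se$ with $s\in\{+1,-1\}$, and substituting into $y_4^2=c^2-\tfrac14$ while using $y_3^2=a^2-\tfrac14$ cancels all quadratic terms, leaving the linear relation $2se\,y_3=c^2-a^2-e^2$. Because $e\neq 0$, this exhibits $y_3$ as a rational number, contradicting its irrationality. Hence the assumption was false and one of $M_3,M_4$ lies on $l$.

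I expect the only genuinely delicate point to be the first step: the reduction onto the perpendicular bisector rests on the triangle inequality being \emph{strict} precisely because $M_i\notin l$, combined with the integrality of the two distances to $M_1$ and $M_2$. Everything afterwards is a short computation, the arithmetic fact "$4a^2-1$ is not a perfect square" being elementary. I would also record the harmless degenerate bookkeeping in passing — $e>0$ because $M_3\neq M_4$, and $a,c\geq 1$ because all four points are distinct, so $4a^2-1\geq 3>0$ and the square roots above are of positive non-square integers.
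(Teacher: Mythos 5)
Your proof is correct. The first half coincides with the paper's: the integrality of $|M_1M_i|-|M_2M_i|$ combined with the strict triangle inequality (valid exactly because $M_i\notin l$) forces each off-line point onto the perpendicular bisector of $M_1M_2$, so the only case needing work is when both $M_3$ and $M_4$ lie on that bisector. Where you diverge is in how you kill this case. The paper observes that the triangle $M_1M_3M_4$ has rational area (base $|M_3M_4|\in\mathbb{Z}$ on the bisector, height $1/2$), concludes the characteristic of $M$ is $1$, and then invokes an external structure theorem (\cite[Theorem 4]{our-vmmsh-2018}) to place $M_3$ at $(0,a/2)$ with $a$ an \emph{integer}, after which the Pythagorean relation $1+a^2=(2b)^2$ has no solution. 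You instead avoid the characteristic machinery entirely: you show $y_3=\pm\tfrac12\sqrt{4a^2-1}$ is irrational (since $4a^2-1$ is never a square), and then use the integer gap $e=|y_3-y_4|$ to derive the linear relation $2sey_3=c^2-a^2-e^2$, exhibiting $y_3$ as rational --- a contradiction. Both arguments bottom out in the same Diophantine fact that $x^2-y^2=1$ has only trivial solutions, but yours is self-contained and elementary, which is a genuine advantage here: the paper's route buys nothing extra in this instance beyond consistency with its characteristic-based framework. All the details you flag (strictness of the triangle inequality, $e>0$, $a,c\geq 1$) are handled correctly.
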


\begin{proof}
	Let $m$ denote the perpendicular bisector of line segment $M_1 M_2$.
	Due to the triangle inequality, there are two possibilities to place each of points $M_i$, $i=3,4$:

	a) $M_i$ belongs to $l$ and $|M_i M_1|-|M_i M_2| = \pm 1$;

	b) $M_i$ belongs to $m$ and $|M_i M_1| = |M_i M_2|$.

	If (a) is true for both points, then $M\subset l$ and thus $M$ is not an integral point set.
	If (a) is true for one point and (b) is true for another, then the claim of the lemma follows.
	So, suppose to the contrary, that both points $M_3$ and $M_4$ belong to $m$.

	The area of the triangle $M_1 M_3 M_4$ is rational because $|M_3 - M_4| \in \mathbb{Z}$.
	Thus, the characteristic of $M$ is 1,
	and there is a Cartesian coordinate system such that $M_1=(-1/2,0)$, $M_2=(1/2,0)$, $M_3=(0, a/2)$
	(see~\cite[Theorem 4]{our-vmmsh-2018}).
	It is clear that $a\neq 0$.
	We set $b = |M_1 - M_3|$ and $O=(0,0)$.
	Applying the Pythagorean theorem to triangle $OM_1M_3$, we obtain the following Diophantine equation:
	\begin{equation}
		\frac{1}{4} + \frac{a^2}{4} = b^2
		,
	\end{equation}
	or, equivalently,
	\begin{equation}
		1 + a^2 = (2b)^2
		.
	\end{equation}
	This equation has no integral solutions.
	This contradiction concludes the proof.
\end{proof}

Using Construction~\ref{con:planar_set_with_minimeter_1},
Lemma~\ref{lem:no_4_points_in_semigeneral_position_with_distance_1} and Corollary~\ref{cor:only_one_distance_1_on_straight_line}
together with results of~\cite[Section 6]{antonov2008maximal},
we obtain the following theorem.

\begin{theorem}
	\label{thm:minimeter_1_planar}
	For every $n\geq 3$ there is a planar integral set $M$ of $n$ points
	such that for some $M_1,M_2 \in M$ equality $|M_1 M_2|=1$ holds.
	This set consists of $n-1$ points, including $M_1$ and $M_2$, on a straight line and one point out of the line.

	And vice versa, if $M$ is a planar integral point set of $n$ points
	such that for some $M_1,M_2 \in M$ equality $|M_1 M_2|=1$ holds,
	then $M$ consists of $n-1$ points, including $M_1$ and $M_2$, on a straight line,
	and one point out of the line, on the perpendicular bisector of line segment $M_1 M_2$.
	There is only one maximal integral point set $M' \supseteq M$,
	and the perpendicular bisector is the axis of symmetry for $M'$.
	Moreover, if $n > 3$, then the distance 1 occurs in $M$ (and $M'$) only once.
\end{theorem}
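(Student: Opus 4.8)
The plan is to prove the two directions separately, and to base the converse on a single structural observation that applies equally to $M$ and to any integral point set containing it.

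For the existence direction I would use Construction~\ref{con:planar_set_with_minimeter_1}. For $n = 3$ one takes the unit equilateral triangle (Remark~\ref{rem:planar_set_with_minimeter_1_of_3_points}); for $n \ge 4$ one picks $k$ with $2^{k}+1 \ge n$, runs the construction to get a facher set of $2^{k}+1$ points with off-line point $N$ and distance-$1$ pair $M_{H+}, M_{H-}$, and deletes any $2^{k}+1-n$ of the remaining points $M_{J\pm}$. The result is still an integral point set (distances in a subset stay integers), still has $N$ off the common line so it is genuinely planar, and still has $n-1$ points on that line including the two endpoints of a segment of length $1$; this is exactly the asserted set, and it is facher.

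For the converse the first step is a structural claim: if $S$ is an integral point set with $\#S \ge 4$ that contains a pair $P,Q$ at distance $1$, then exactly one point of $S$ lies off the line $l$ through $P$ and $Q$, and that point lies on the perpendicular bisector $m$ of $PQ$. Two points off $l$ are impossible, since together with $P,Q$ they would form a non-collinear four-point integral set violating Lemma~\ref{lem:no_4_points_in_semigeneral_position_with_distance_1}, while at least one off-line point exists because $S$ is not contained in a line. For the unique off-line point $N$, the integers $|NP|$ and $|NQ|$ differ by strictly less than $|PQ|=1$ (strict because $N\notin l$), hence are equal, so $N\in m$. Applied to $S=M$ this gives the stated form of $M$; for $n=3$ the same triangle-inequality argument works directly without the four-point lemma.

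The delicate part is the uniqueness and symmetry of the maximal superset $M'$. I would fix coordinates with $l$ the $x$-axis and $M_1=(-1/2,0)$, $M_2=(1/2,0)$, so $m$ is the $y$-axis and $N=(0,h)$ with $r:=|NM_1|=|NM_2|\in\mathbb N$, $h=\sqrt{r^{2}-1/4}$. Every point of $M$ on $l$ is of the form $(j+\tfrac12,0)$, $j\in\mathbb Z$ (because $|PM_1|,|PM_2|\in\mathbb Z$ and one is the other plus $1$), and then $|NP|^{2}=j(j+1)+r^{2}$; so a position $j$ is admissible exactly when $j(j+1)+r^{2}$ is a perfect square, which by rewriting as $(2s)^{2}-(2j+1)^{2}=4r^{2}-1$ makes the admissible set $J(r)$ finite and symmetric under $j\mapsto -j-1$, i.e.\ symmetric about $m$. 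By the structural claim (applied to $M'$ together with any hypothetical extra point) any maximal $M'\supseteq M$ has the same unique off-line point $N$ and the same $r$, and its on-line part must be the \emph{whole} admissible family $L(r):=\{(j+\tfrac12,0):j\in J(r)\}$, for otherwise $M'$ could be enlarged; conversely $L(r)\cup\{N\}$ is itself an integral point set containing $M$ and maximal by the same two observations, hence it is the unique maximal superset of $M$ and is symmetric about $m$. Finally, when $n>3$ some admissible $j\notin\{0,-1\}$ exists; since $j(j+1)+1$ is a perfect square only for $j\in\{0,-1\}$, this rules out $r=1$, so $r\ge 2$ and $h=\sqrt{r^{2}-1/4}>1$, whence no distance from $N$ reaches $1$, and by Corollary~\ref{cor:only_one_distance_1_on_straight_line} the only length-$1$ segment on $l$ is $M_1M_2$; thus distance $1$ occurs exactly once in $M$, and likewise in $M'$. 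The main obstacle is precisely this last step: pinning down the maximal set requires the Diophantine description of $J(r)$ together with the maximality analysis of facher sets from~\cite[Section~6]{antonov2008maximal}.
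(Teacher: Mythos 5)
Your proposal is correct and uses exactly the ingredients the paper itself lists for this theorem (which it states without a written-out proof): Construction~\ref{con:planar_set_with_minimeter_1} plus deletion of surplus on-line points for existence, Lemma~\ref{lem:no_4_points_in_semigeneral_position_with_distance_1} and the strict triangle inequality for the structural converse, and Corollary~\ref{cor:only_one_distance_1_on_straight_line} for the uniqueness of the unit distance. The one place you genuinely diverge is the uniqueness and symmetry of the maximal superset $M'$: the paper outsources this to \cite[Section~6]{antonov2008maximal}, whereas you make it self-contained via the coordinates $M_1=(-1/2,0)$, $M_2=(1/2,0)$, $N=(0,\sqrt{r^2-1/4})$, the Diophantine condition $j(j+1)+r^2=s^2$ rewritten as $(2s)^2-(2j+1)^2=4r^2-1$ (giving finiteness and the symmetry $j\mapsto -j-1$), and the observation that the full admissible family $L(r)\cup\{N\}$ is itself integral, forcing every maximal superset to equal it; this explicit argument is sound and arguably an improvement over the bare citation, and your reduction $r\geq 2$ for $n>3$ (since $j^2+j+1$ is a square only for $j\in\{0,-1\}$) correctly closes the ``distance 1 occurs only once'' claim.
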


Using the ``blowing up'' procedure described in~\cite[theorem 1.3]{kurz2008bounds},
we can construct an integral point set $M$ in $m$-dimensional Euclidean space, $m\geq 3$,
with distance 1 occuring in it.
If we have an integral point set with distance 1 occuring,
then we can easily dilate it to turn 1 into the desired distance.
These facts, together with Theorem~\ref{thm:minimeter_1_planar}, give the following theorem.

\begin{theorem}
	For arbitrary integers $m \geq 2$, $n \geq m+1$, $d \geq 1$
	there exists $M\in\mathfrak{M}(m,n)$,
	such that for some $M_1, M_2\in M$ equality $|M_1 M_2| = d$ holds.
\end{theorem}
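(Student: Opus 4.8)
The plan is to reduce the $m$-dimensional statement to the planar case already established in Theorem~\ref{thm:minimeter_1_planar}, and then reduce the ``distance exactly $d$'' requirement to the ``distance exactly $1$'' case via dilation. So the proof proceeds in three short steps. First, for $m=2$ and any $n\geq 3$, Theorem~\ref{thm:minimeter_1_planar} directly supplies a set $M\in\mathfrak{M}(2,n)$ containing two points at distance $1$; this is exactly the base case. Second, for $m\geq 3$ one invokes the ``blowing up'' procedure of~\cite[Theorem 1.3]{kurz2008bounds}: starting from a planar integral point set with distance $1$ occurring, one lifts it into $\mathbb{R}^3,\mathbb{R}^4,\dots,\mathbb{R}^m$ one dimension at a time, at each stage adding a new point off the current hyperplane whose distances to the existing points are all integers, so that after $m-2$ steps one has a set in $\mathfrak{M}(m,n')$ of full affine dimension in which the distance $1$ is still realised between the two original points. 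One must make sure that the cardinality can be taken to be exactly $n$ for any $n\geq m+1$: the blowing-up adds one point per dimension, giving at least $m+1$ points, and further points can always be appended along one of the lines (or the perpendicular bisector) in the planar part exactly as in Construction~\ref{con:planar_set_with_minimeter_1} and Theorem~\ref{thm:minimeter_1_planar}, without disturbing the distance-$1$ pair; alternatively one simply starts the construction with enough planar points before blowing up. Third, given $M\in\mathfrak{M}(m,n)$ with $|M_1M_2|=1$, the dilation $x\mapsto d\cdot x$ sends $M$ to $dM\in\mathfrak{M}(m,n)$ (integrality of all pairwise distances and full affine dimension are preserved under scaling by a positive integer, as already noted in the introduction), and now $|M_1M_2|=d$.

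Concretely I would organise it as: (i) cite Theorem~\ref{thm:minimeter_1_planar} for the planar case; (ii) state the lifting step, referencing the blowing-up theorem of Kurz, and observe that it preserves the distance-$1$ pair and only increases the number of points, and combine it with the planar constructions to attain exactly $n$ points for every $n\geq m+1$; (iii) apply the integer dilation $x\mapsto dx$ to convert the realised distance from $1$ to $d$. Each step is essentially a citation plus a one-line observation, so the write-up is brief.

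The main obstacle — such as it is — is the bookkeeping in step (ii): one has to confirm that the blowing-up procedure of~\cite[Theorem 1.3]{kurz2008bounds} can be carried out while keeping the original distance-$1$ pair intact (it can, since it only adds points and never moves the existing ones), and that the resulting cardinality can be tuned to exactly $n$ for every $n\geq m+1$ rather than only for cardinalities of a special form. This is handled by noting that in the planar seed one is free to choose the number of collinear points (Theorem~\ref{thm:minimeter_1_planar} gives a planar set of any cardinality $\geq 3$ with distance $1$), so one seeds with $n-(m-2)$ planar points and then performs $m-2$ blow-up steps to reach exactly $n$ points in $\mathbb{R}^m$; one needs $n-(m-2)\geq 3$, i.e.\ $n\geq m+1$, which is precisely the hypothesis. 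There is no genuine difficulty beyond making these reductions explicit.
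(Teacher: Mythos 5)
Your proposal matches the paper's own argument: the paper likewise obtains the planar case from Theorem~\ref{thm:minimeter_1_planar}, lifts to dimension $m\geq 3$ via the ``blowing up'' procedure of~\cite[Theorem 1.3]{kurz2008bounds}, and dilates to convert the realised distance $1$ into $d$. Your extra bookkeeping on attaining exactly $n$ points is a welcome explicit check of a step the paper leaves implicit, but the route is the same.
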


\begin{definition}
	\cite{noll1989nclusters}
	We will call an integral point set $M$ \textit{prime}, if the greatest common divisor
	of all the distances occuring in $M$ is 1.
\end{definition}

If an integral point set is prime,
then it cannot be squashed to an integral point set of the same power and structure but smaller diameter.

\begin{theorem}
	\label{thm:prime_with_desired_distance}
	For every $m \geq 3$, $n \geq m+1$, $d \geq 1$ there exists a \textbf{prime}
	integral point set $M\in\mathfrak{M}(m,n)$ which
	contains points $M_1$ and $M_2$ such that distance between $M_1$ and $M_2$ is exactly $d$.
\end{theorem}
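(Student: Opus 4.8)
The plan is to build, for the given $m\geq 3$, $n\geq m+1$ and $d\geq 1$, an integral point set of $n$ points in $\mathbb{R}^m$ that contains the distance $d$ together with \emph{some} distance coprime to $d$; such a set is automatically prime, since the greatest common divisor of all its distances divides the gcd of this single pair, namely $1$. This also clarifies the hypothesis $m\geq 3$: the set exhibited for the previous theorem does not have this property, because it is produced by dilating a distance-$1$ set by the factor $d$, after which \emph{every} distance is a multiple of $d$, so a genuinely new construction is needed. The first step is a dimensional reduction: it suffices to construct a \emph{planar} prime integral point set of $n'':=n-m+2\ (\geq 3)$ points containing the distance $d$, and then to iterate the ``blowing up'' procedure of \cite{kurz2008bounds} exactly $m-2$ times — each application adds one point and one dimension and preserves every already-realised distance, so both primeness and the distance $d$ survive and the outcome lies in $\mathfrak{M}(m,n)$.

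For $n''=3$ (equivalently $n=m+1$) a triangle with side lengths $1,d,d$ already works: it is non-degenerate for every $d\geq 1$, it contains the distance $d$, and the gcd of its three distances is $\gcd(1,d)=1$; the case $d=1$ with arbitrary $n''$ is also settled outright by Theorem~\ref{thm:minimeter_1_planar}, which produces a planar set of $n''$ points containing distance $1$, hence prime. So the remaining task is: for $d\geq 2$ and $n''\geq 4$, construct a planar prime integral point set of $n''$ points containing the distance $d$. Here I would use a facher configuration in the spirit of Construction~\ref{con:planar_set_with_minimeter_1}: collinear points $M_{c\pm}=(\pm b_c/2,0)$ indexed by the divisors $c\equiv 1\pmod 4$ of an odd parameter $a\equiv 3\pmod 4$, with $b_c=(c-a/c)/2$, together with the apex $N=(0,\sqrt a/2)$. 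Since $(c-a/c)^2+4a=(c+a/c)^2$, one gets $|N M_{c\pm}|=(c+a/c)/4\in\mathbb{Z}$, and the collinear distances $|b_{c_1}\mp b_{c_2}|/2$ are integers as well (all $b_c$ being odd), so this is always an integral point set; it has roughly $\tau(a)$ points, and the distance $d$ appears among its points precisely when $a$ admits the appropriate divisor relation — for instance $a=(s-d)(s+d)$ with an integer $s>d$ realises $d=|M_{c+}M_{c-}|$ for $c=s+d$ when the parities fall right, and suitable two-divisor relations handle the other cases.

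The crux is the number-theoretic choice of $a$: it must be simultaneously (i) odd with $a\equiv 3\pmod 4$, so that the apex distances are integral; (ii) divisible enough that the admissible divisors number at least $\lceil n''/2\rceil$, after which the superfluous points are discarded while keeping those that realise $d$ and the coprime distance; (iii) endowed with the divisor relation realising the prescribed gap $d$, which requires splitting into cases according to $d\bmod 4$ and, for the awkward residues, either passing to a facher set of a different characteristic $q$ or invoking the classification of maximal integral point sets in \cite[Section~6]{antonov2008maximal}; and (iv) arranged so that at least one realised distance — e.g.\ $|N M_{1\pm}|=(1+a)/4$ — is coprime to $d$, which can be secured because, once all the congruence conditions on $a$ are imposed, its residues modulo the primes dividing $d$ are still free to avoid finitely many bad classes. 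I expect the parity/residue bookkeeping in (iii)–(iv), together with verifying that the finitely many discarded-point sub-configurations remain prime and two-dimensional, to be the main technical obstacle; once it is discharged, the $m-2$ blow-ups complete the proof.
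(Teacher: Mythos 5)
There is a genuine gap, and it lies in your dimensional reduction. You insist that the \emph{planar} seed set of $n''=n-m+2$ points already be prime \emph{and} contain the distance $d$. For $d\geq 2$ and $n''\geq 4$ that is precisely Conjecture~\ref{hyp:prime_planar}, which this paper explicitly leaves open; your sketched divisor-theoretic choice of the parameter $a$ (conditions (i)--(iv)) is not carried out, and you yourself flag the residue bookkeeping as "the main technical obstacle." So your argument covers only $n=m+1$ (the $1,d,d$ triangle) and $d=1$, and reduces everything else to an unsolved problem. The correct move — and the reason the hypothesis is $m\geq 3$ rather than $m\geq 2$ — is to \emph{decouple} the two requirements: the paper's proof keeps the distance $1$ in the collinear part of the facher set (this alone forces the gcd of all distances to be $1$, so primeness is free and no coprimality engineering is needed), and realizes the distance $d$ in the extra dimensions, namely as the side length of the regular $(m-2)$-dimensional simplex of $m-1$ points that replaces the apex in the blowing-up procedure of~\cite[Theorem 1.3]{kurz2008bounds}. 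The simplex side length is a free integer parameter once the height of the facher set is taken large enough, surplus collinear points $M_J$ are discarded to hit the cardinality $n$, and the resulting set contains both $1$ and $d$. Your observation that blow-ups preserve all realized distances (hence primeness and the occurrence of $d$) is correct, but you aimed it at the wrong planar input.
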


\begin{proof}
	Take a facher integral point set according to Construction~\ref{con:planar_set_with_minimeter_1}
	with sufficiently large height and then apply the ``blowing up'' procedure described in~\cite[Theorem 1.3]{kurz2008bounds},
	replacing the out-of-line point with an $(m-2)$-dimensional simplex of proper side length.
	That simplex will consist of $m-1$ points;
	we can throw out up to all points $M_J$, except the two points with distance 1.
	So, the set consists of $m+1$ or more points and has distance 1 occuring in it.
	As distance 1 occurs in the obtained set, it is prime.
\end{proof}

A slightly stronger result can also be claimed.

\begin{theorem}
	\label{thm:prime_with_unique_desired_distance}
	For arbitrary integers $m > 2$, $n \geq m+1$, $d \geq 1$
	there exists a prime set $M\in\mathfrak{M}(m,n)$,
	such that for some $M_1, M_2\in M$ equality $|M_1 M_2| = d$ holds,
	the distance $d$ is minimal and occurs in $M$ only once.
\end{theorem}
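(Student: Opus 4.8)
The plan is to construct $M$ explicitly rather than to invoke Theorem~\ref{thm:prime_with_desired_distance}, because here we must in addition control the \emph{value} of the minimum distance and the number of pairs realizing it. The hypothesis $m>2$ enters precisely so that there is a nontrivial simplex to attach, and that simplex supplies an integral distance coprime to $d$, which is what yields primality; for $m=2$ the construction degenerates and all distances stay divisible by $d$.

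\emph{Core case $n=m+1$.} Put $M_1=(d/2,0,\dots,0)$ and $M_2=(-d/2,0,\dots,0)$, so $|M_1M_2|=d$. Choose an integer $s>d$ with $\gcd(s,d)=1$ (for instance $s=d+1$, which works for every $d\ge 1$), and let $T$ be a regular $(m-2)$-dimensional simplex of edge length $s$, so its circumradius is $R=s\sqrt{(m-2)/(2(m-1))}$. Place $T$ in the affine plane through $(0,h,0,\dots,0)$ parallel to $\operatorname{span}(e_3,\dots,e_m)$, where $h=\sqrt{k^2-d^2/4-R^2}$ for an integer $k>d$ with $k^2>d^2/4+R^2$; a short computation shows that $k=s=d+1$ meets these requirements for all $m>2$. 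Then every vertex of $T$ is at distance $k$ from each of $M_1,M_2$, any two vertices of $T$ are at distance $s$, and $|M_1M_2|=d$; as $d,k,s$ are positive integers, $M=\{M_1,M_2\}\cup T$ is an integral point set. Since $h>0$, the affine hull of $M$ uses the directions $e_1$, $e_2$ and the $(m-2)$ directions of $T$, so $M\in\mathfrak{M}(m,m+1)$. Since $d<s$ and $d<k$, the minimum distance in $M$ equals $d$ and is realized only by $M_1,M_2$; and $\gcd(d,s)=1$ forces the gcd of all distances in $M$ to be $1$, so $M$ is prime.

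\emph{Case $n>m+1$.} Adjoin $n-m-1$ further points on the first coordinate axis. A point $(x,0,\dots,0)$ lies at distance $|x\mp d/2|$ from $M_1,M_2$ and at distance $\sqrt{x^2+(k^2-d^2/4)}$ from each vertex of $T$; note that $\pm d/2$ are themselves such abscissae, since $(d/2)^2+(k^2-d^2/4)=k^2$. Hence it suffices to find $n-m-1$ additional reals $x$ with $x-d/2\in\mathbb{Z}$ and $x^2+C$ a perfect square, where $C=k^2-d^2/4$ is fixed. Such $x$ correspond to factorizations of $4C=(2k-d)(2k+d)$ into two factors of suitable parity, so by choosing $k$ so that $C$ has many prime factors we obtain as many admissible abscissae as we wish; selecting $n-m-1$ of them with large, well-separated absolute values keeps every newly created distance larger than $d$, so $d$ remains the unique minimum, and primality is unaffected since the new distances join a set whose gcd is already $1$.

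The main obstacle is this last counting step: one must choose $k$ so that $C=k^2-d^2/4$ simultaneously (i) carries at least $n-m+1$ collinear points at integrally spaced abscissae, i.e.\ has enough divisor factorizations of the right type; (ii) keeps $k>d$ and $k^2>d^2/4+R^2$ with $R$ the circumradius of the edge-$s$ simplex; and (iii) leaves $s=d+1$, so that $\gcd(s,d)=1$ and $s>d$. For $d$ even, $C$ is an integer and one may take $C=(k-d/2)(k+d/2)$ with both factors highly composite; for $d$ odd, $C$ is a quarter-integer, the abscissae are half-integers, and one must track an additional divisibility-by-$4$ condition on the factorizations of the odd number $(2k-d)(2k+d)$. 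This forces a mild case split, but is otherwise routine.
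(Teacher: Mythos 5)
Your overall strategy is the same as the paper's: a planar collinear configuration containing the distance $d$, with the out-of-line part replaced by a regular $(m-2)$-simplex of edge $d+1$, so that $\gcd(d,d+1)=1$ forces primality. The paper gets the planar part for free: it takes the facher set of Construction~\ref{con:planar_set_with_minimeter_1} (where distance $1$ occurs, and by Theorem~\ref{thm:minimeter_1_planar} occurs only once for more than $3$ points), dilates it by $d$ so that every planar distance is a multiple of $d$ and hence $d$ is automatically the unique minimum, and then applies the blow-up of~\cite[Theorem 1.3]{kurz2008bounds}. Your core case $n=m+1$ is complete and correct, and is essentially the blown-up dilated triangle; the verification that $k=s=d+1$ gives $h>0$ and that the affine hull is $m$-dimensional is sound.

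The genuine gap is in the case $n>m+1$. You must produce $n-m-1$ additional abscissae $x$ with $x-d/2\in\mathbb{Z}$ and $x^2+C$ a perfect square, pairwise separated by more than $d$, and you assert this follows from choosing $k$ so that $C=k^2-d^2/4$ has many divisors --- but this is exactly the nontrivial content, and it is not carried out. The factorization $w^2-x^2=C$ does not hand you divisor pairs of the correct parity without work (for $d$ odd the abscissae are half-odd-integers and $4C=(2k-d)(2k+d)$ is odd, so the required congruences on the factor pairs must actually be checked, not just announced as a ``mild case split''); moreover, having many admissible $x$ does not immediately give $n-m-1$ of them that are pairwise more than $d$ apart, and nearby admissible abscissae would create distances smaller than $d$ and destroy both minimality and uniqueness. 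This is precisely the difficulty that Construction~\ref{con:planar_set_with_minimeter_1} resolves in closed form (via the factorization of $2^{2^k}-1$ into pairwise coprime Fermat-type factors), and that the dilation-by-$d$ trick then converts into automatic minimality. As written, your proof proves the theorem only for $n=m+1$ and defers the general case to an unproved counting claim.
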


\begin{proof}
	We apply Construction~\ref{con:planar_set_with_minimeter_1} to obtain $M'\in \mathfrak{M}(2,n-m+2)$
	with $M'_1, M'_2 \in M'$ and $|M'_1 M'_2| = 1$.
	Dilate $M'$ to $M''$ in such a way that $M'_1$ and $M'_2$ turn into $M''_1$ and $M''_2$ resp.
	and $|M''_1 M''_2| = d$.
	Then ``blow up'' $M''$ to $M\in \mathfrak{M}(m,n)$ using a simplex of side length $d+1$.
	The greatest common divisor of $d$ and $d+1$ is $1$, so $M$ is prime.
\end{proof}

\section{Final remarks and open problems}

\begin{remark}
	The bound of Theorem~\ref{thm:main_estimate} is not tight.
	The derivation of tight bounds for the minimum diameter $d(2, n)$
	is still a challenging task for the forthcoming investigation~\cite[Section 7]{kurz2008minimum}.
\end{remark}
The known upper bound is $d(2,n)\leq 2^{c \log n \log \log n}$~\cite{harborth1993upper}.
However, we hope that the present article can provide a framework for possible better estimates.

\begin{conjecture}
	The approach of Theorem~\ref{thm:main_estimate} can be generalized to higher dimensions.
\end{conjecture}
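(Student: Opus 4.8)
The plan is to prove, for each fixed $m\ge 3$, a lower bound of the form $d(m,n)\ge c_m n^{\alpha_m}$ with an explicit constant $c_m>0$ and an exponent $\alpha_m>1/m$ strictly improving the elementary sphere-packing bound $d(m,n)\gtrsim_m n^{1/m}$; the natural target, matching the planar case, is $\alpha_m=2/m$. The scheme follows the proof of Theorem~\ref{thm:main_estimate}. First, Lemma~\ref{lem:square_container} generalises verbatim: if $M\in\mathfrak{M}(m,n)$ and $\operatorname{diam}M=b$, then $M$ lies in an axis-parallel $m$-cube of side $b$ (its bounding box). Second, one replaces Theorem~\ref{thm:varphi_k_bounds} by the analogous packing estimate in the $m$-cube for the $m$-dimensional PPS coefficient $\varphi_n^{(m)}$, namely $\varphi_n^{(m)}\le C_m\,n^{-1/m}$, obtained from the standard volume bound (disjoint balls of radius $\varphi_n^{(m)}/2$ inside a slightly enlarged cube); hence $M$ contains pairs of points at distance at most $C_m b\,n^{-1/m}$. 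Third --- and here is the real work --- one selects $m$ pairs $\{A_i,B_i\}\subset M$, $i=1,\dots,m$, with small $|A_iB_i|$ and with axes $A_i-B_i$ in sufficiently general position; then every $N\in M$, having integer distances to all $2m$ selected points, lies on $\bigcap_{i=1}^m\rho(k_i,A_iB_i)$ for some integers $|k_i|<|A_iB_i|$, where each $\rho(k_i,A_iB_i)$ is now an $(m-1)$-dimensional sheet of a hyperboloid of revolution, or a hyperplane when $k_i=0$.

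For the counting one invokes the Anning--Erd\"os argument (cf.\ \cite{erdos1945integral}, as in the proof of Theorem~\ref{thm:main_estimate}): for a generic parameter tuple the $m$ hypersurfaces form a complete intersection and meet in at most $2^m$ points by B\'ezout, while the non-generic tuples --- in which some $k_i$ vanish and the perpendicular-bisector hyperplanes collapse the dimension --- leave only finitely many lines, and the points of $M$ on each such line number at most $\gamma_m b+O_m(1)$ by the $m$-dimensional analogues of Lemma~\ref{lem:2k-1_segments}, Lemma~\ref{lem:line_segment_with n_squared_plus_one_points} and Proposition~\ref{obs:estimate_points_on_straight_line}. A pleasant feature is that this collinear machinery is essentially dimension-free: the proofs of Lemma~\ref{lem:2k-1_segments} and Proposition~\ref{obs:rho_curves} use only a single point of $M$ off the line, so they carry over unchanged to $M\in\mathfrak{M}(m,\mathbb{N})$. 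Summing the contributions gives an inequality of the shape
\[
	n\;\le\;2^m\prod_{i=1}^m\bigl(2|A_iB_i|-1\bigr)+O_m(b)\;\lesssim_m\;\frac{b^m}{n}+O_m(b),
\]
which, solved for $b/n^{2/m}$ exactly as inequality~\eqref{eq:square_inequality_lambda} was solved for $\lambda$, yields the bound.

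The hard part is the third step: producing $m$ \emph{short} pairs whose difference-hypersurfaces are jointly transverse. In the plane two conics always meet in at most four points even when their axes are parallel, which is why the two minimal-distance pairs --- automatically non-crossing by Lemma~\ref{lemma:quadr_diag_edges} --- suffice there; for $m\ge3$ transversality is genuinely more delicate, since two coaxial hyperboloids of revolution can share a whole circle and two quadrics can share a conic, and such a circle or conic may carry many integer-distance points, so the selection must be arranged to exclude such common components. Moreover the small distances of $M$ need not span enough directions: a ``facher'' set (Construction~\ref{con:planar_set_with_minimeter_1} blown up to $\mathbb{R}^m$ by replacing the out-of-line point with an $(m-2)$-simplex) confines all of its small distances to the line together with the $(m-2)$-flat of the simplex, spanning only $m-1$ independent directions. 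A careful case analysis suggests that $m$ pairs spanning an $(m-1)$-flat still suffice, every degenerate residual being at worst a line, so $\alpha_m=2/m$ may well survive; but if one were forced to include a pair of length comparable to $\operatorname{diam}M$, the exponent would drop to $\alpha_m=(2m-1)/m^2$, still above $1/m$ but short of the target. Thus the crux is a structural lemma describing which configurations of small distances can occur in a full-dimensional integral point set --- showing, one hopes, that a suitable transverse $m$-tuple of short pairs can always be chosen, the only escape being configurations whose diameter is manifestly linear in $n$. A secondary, purely bookkeeping obstacle is that the taxonomy of degenerate intersections grows with $m$ and each class needs its own collinear estimate; none of this looks like a conceptual barrier, only laborious --- consistent with stating the generalisation as a conjecture rather than a theorem.
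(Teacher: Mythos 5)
The statement you are trying to prove is stated in the paper as a \emph{conjecture}: the author supplies no proof, so there is no argument of the paper to compare yours against. What you have written is, by your own admission, a programme rather than a proof --- the ``hard part'' of the third step is left as a hoped-for structural lemma --- so it cannot be accepted as a proof of the conjecture, although it is a reasonable and well-informed sketch of how the planar argument of Theorem~\ref{thm:main_estimate} would have to be adapted.

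Beyond the selection problem you already identify (producing $m$ short pairs whose hyperboloids are jointly transverse, which fails outright for blown-up facher sets spanning only $m-1$ short directions), there is one gap you underestimate. You assert that ``every degenerate residual is at worst a line,'' so that the collinear machinery of Lemma~\ref{lem:2k-1_segments} and Proposition~\ref{obs:estimate_points_on_straight_line} disposes of the non-generic cases. That is false already for $m=3$: two quadric surfaces in $\mathbb{R}^3$ generically meet in a space \emph{curve} of degree four, and in degenerate positions (coaxial hyperboloids of revolution, a hyperboloid meeting a bisector plane) the residual component is a circle or a conic, not a line. The paper's own Theorem~\ref{thm:rational_set_on_circle} shows that a circle can carry a dense rational-distance set, so bounding the number of points of $M$ on such a residual circle of radius comparable to $b$ is not a routine analogue of the collinear lemmas --- those lemmas rest on Proposition~\ref{obs:rho_curves}, whose proof uses the linear order of the points on a line and does not transfer to a circle. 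Controlling integral point sets on circles requires separate number-theoretic input that appears nowhere in your outline. This, together with the unresolved transversality step, is precisely why the statement is a conjecture and not a theorem; your sketch is consistent with the intended direction but does not close either gap.
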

For the overview of current lower bounds for higher dimensions, we refer the reader to~\cite{nozaki2013lower}.

There are also several ways to improve the current bound,
based on Lemma~\ref{lem:square_container}.

\begin{problem}
	\label{prb:square_side_length_and_diameter_power}
	Is there a number $\delta < 1$ such that any $M\in\mathfrak{M}(2,\mathbb{N})$ with $\operatorname{diam} M = d$
	is situated in a square of side length $d^\delta$?
\end{problem}
If the answer for Problem~\ref{prb:square_side_length_and_diameter_power} is affirmative,
then we can obtain a bound better than linear.

\begin{problem}
	\label{prb:square_side_length_and_diameter_linear}
	What are the maximal numbers $\delta \leq 1$ and $h \leq 0$
	such that any $M\in\mathfrak{M}(2,\mathbb{N})$ with $\operatorname{diam} M = d$
	is situated in a square of side length $\delta d - h$?
\end{problem}

If the answer for Problem~\ref{prb:square_side_length_and_diameter_linear} is not $\delta = 1$ and $h = 0$,
then the linear bound can be slightly improved.

However, Problems~\ref{prb:square_side_length_and_diameter_power} and~\ref{prb:square_side_length_and_diameter_linear}
do not seem to show the way for improving the bound
due to the following well-known theorem,
which can be proved by an application of inversion~\cite{solymosi2010question}.
\begin{theorem}
	\label{thm:rational_set_on_circle}
	There exists a dense subset $P$ of the unit circle such that for any $P_1, P_2 \in P$
	the distance $|P_1 P_2|$ is rational.
\end{theorem}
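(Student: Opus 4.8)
The plan is to prove Theorem~\ref{thm:rational_set_on_circle} by inversion, as suggested in~\cite{solymosi2010question}. Recall the distance formula for inversion: if $\iota$ is the inversion with centre $O$ and radius $r$ and we write $P' = \iota(P)$, $Q' = \iota(Q)$ for $P, Q \neq O$, then $|P'Q'| = r^2 |PQ| / (|OP|\cdot|OQ|)$. So it is enough to produce a point $O$, a line $\ell$ with $O \notin \ell$, and a subset $S \subset \ell$ such that $S$ is dense in $\ell$, all pairwise distances of $S$ are rational, and all distances $|OP|$ with $P \in S$ are rational. Taking $r = 1$ and applying $\iota$, the line $\ell$ maps to a circle $C$ through $O$, the set $S$ maps to a set $S' \subset C$ dense in $C$ (since $\iota$ is a homeomorphism of $\mathbb{R}^2 \setminus \{O\}$ onto itself), and every pairwise distance of $S'$ is rational by the displayed formula. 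If moreover $C$ has rational radius, then a homothety of rational ratio followed by a translation carries $C$ onto the unit circle and $S'$ onto the desired dense rational-distance set $P$.

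First I would fix $\ell = \{(x,0) : x \in \mathbb{R}\}$ and $O = (0,1)$. For a rational parameter $u$ with $|u| < 1$ set $q(u) = 2u/(1-u^2) \in \mathbb{Q}$; then
\[
    1 + q(u)^2 = \frac{(1-u^2)^2 + 4u^2}{(1-u^2)^2} = \left(\frac{1+u^2}{1-u^2}\right)^2 ,
\]
so $|O\,(q(u),0)| = \sqrt{1+q(u)^2} = (1+u^2)/(1-u^2) \in \mathbb{Q}$. Put $S = \{(q(u),0) : u \in \mathbb{Q},\ |u| < 1\}$. Every pairwise distance within $S$ is rational because the points have rational first coordinates, and every distance from $O$ to a point of $S$ is rational by the computation above. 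Finally $u \mapsto q(u)$ is a strictly increasing bijection of $(-1,1)$ onto $\mathbb{R}$, so it takes the dense set $\mathbb{Q} \cap (-1,1)$ to a dense subset of $\mathbb{R}$; hence $S$ is dense in $\ell$.

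Then I would apply the inversion $\iota$ with centre $O = (0,1)$ and radius $1$. As $\ell$ lies at distance $1$ from $O$, its image $C$ is the circle through $O$ and through the foot $F = (0,0)$ of the perpendicular from $O$ to $\ell$ — note $F$ is fixed by $\iota$, being at distance $1$ from $O$ — so $C$ is the circle of radius $\tfrac12$ with diameter $OF$. Consequently $S' = \iota(S)$ is a dense subset of $C$ with all pairwise distances rational, and $C$ has the rational radius $\tfrac12$. A homothety of ratio $2$ (about any point) together with a translation sends $C$ to the unit circle and multiplies all distances by $2$, keeping them rational; the image of $S'$ under this similarity is the required dense subset $P$ of the unit circle with pairwise rational distances.

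I expect the only genuine obstacle to be the construction in the second paragraph: one must find, on a single line, a set that is \emph{simultaneously} dense, internally rational-distance, and at rational distance from one fixed exterior centre, and the naive candidates fail (all rational points of $\ell$ are generally not at rational distance from $O$; points at rational distance from $O$ need not have mutually rational distances). The Pythagorean-type parametrization $q(u) = 2u/(1-u^2)$ is exactly what threads this needle, and the rest — the inversion distance identity, preservation of density under $\iota$, the shape and radius of $C$, and the final similarity — is routine. One could also bypass inversion: on the unit circle the chord joining $(\cos 2\psi_1,\sin 2\psi_1)$ and $(\cos 2\psi_2,\sin 2\psi_2)$ has length $2\,|\sin\psi_1\cos\psi_2 - \cos\psi_1\sin\psi_2|$, which is rational whenever the $(\cos\psi_i,\sin\psi_i)$ are rational points of the circle; such points are dense and angle doubling preserves density, so this yields $P$ directly.
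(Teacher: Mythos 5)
Your proof is correct, and it follows exactly the route the paper indicates: the paper states this as a well-known result provable ``by an application of inversion'' (citing Solymosi's survey) without giving any details, and your argument is a complete and accurate realization of that inversion proof, including the one nontrivial ingredient (a dense rational-distance set on a line all of whose points are at rational distance from the inversion centre, via the parametrization $q(u)=2u/(1-u^2)$). Nothing to correct.
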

We conjecture that thorough investigation of existing examples of circular integral point sets
~\cite{bat2018number,harborth1993upper,piepmeyer1996maximum}
will lead to the negative answers for the
Problems~\ref{prb:square_side_length_and_diameter_power} and~\ref{prb:square_side_length_and_diameter_linear}.

Another possibility to increase the constant in our lower bound is introduced by the following problem.

\begin{problem}
	\label{prb:minimal_shape}
	What is the minimal shape $\mathcal{S}$ such that any $M\in\mathfrak{M}(2,\mathbb{N})$ with $\operatorname{diam} M = d$
	is situated in  $\mathcal{S}$?
\end{problem}
This problem is another generalization of Lemma~\ref{lem:square_container}.
We know that $\mathcal{S}$ is not a circle of diameter $d$
(the counterexample is an equilateral triangle of side length $d$)
and we conjecture that $\mathcal{S}$ is not a Reuleaux triangle of width $d$
(the possible counterexample belongs to a circle of diameter $d$, see Theorem~\ref{thm:rational_set_on_circle}).

The solution to Problem~\ref{prb:minimal_shape} (if ever found) will lead to a sophisticated packing problem,
something like ``How to put $n$ points in a convex hull of concentric circle and Reuleaux triangle?''
Obviously, Problem~\ref{prb:minimal_shape} can be generalized to higher dimensions.

For $n=3,4,5$, there are optimal sets in $\mathfrak{M}(2,n)$ that contain distance 1
(see Remarks~\ref{rem:planar_set_with_minimeter_1_of_3_points} and~\ref{rem:planar_set_with_minimeter_1_of_4_and_5_points}).
However, we can suggest the following conjecture.
\begin{conjecture}
	\label{con:no_optimal_with_edge_1}
	Every set $M\in\mathfrak{M}(2,n)$, $n\geq 6$, such that for some $M_1,M_2$ equality $|M_1 M_2|=1$ holds,
	is not optimal.
\end{conjecture}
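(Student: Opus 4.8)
The plan is to use the structural rigidity of Theorem~\ref{thm:minimeter_1_planar} to recast the problem as a statement about the divisors of a single integer, to extract from it a superpolynomial lower bound for the diameter of any such $M$, and to compare that bound with the upper bound $d(2,n)\le 2^{c\log n\log\log n}$ of~\cite{harborth1993upper}. \emph{Reduction.} By Theorem~\ref{thm:minimeter_1_planar}, any $M\in\mathfrak{M}(2,n)$ with $|M_1M_2|=1$ is facher: $n-1$ of its points, including $M_1$ and $M_2$, lie on a line $l$, and the remaining point $N$ lies on the perpendicular bisector of $M_1M_2$. Put $M_1=(0,0)$, $M_2=(1,0)$; a short triangle-inequality-and-integrality argument (of the kind used in the proof of Lemma~\ref{lem:no_4_points_in_semigeneral_position_with_distance_1}) shows that every point of $M\cap l$ has integer abscissa and that $N=(1/2,h)$ with $4h^2=4a^2-1$ for $a=|NM_1|=|NM_2|\in\mathbb{N}$. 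For a collinear point at integer abscissa $j$, the condition $|NM_j|\in\mathbb{N}$ is equivalent to a factorization $4a^2-1=u_jv_j$ with $u_j,v_j$ odd, $4|NM_j|=u_j+v_j$ (hence $u_j+v_j\equiv0\bmod4$) and $4j-2=\pm(v_j-u_j)$; thus $|j|$ is comparable to $|v_j-u_j|$. Consequently the abscissae of the $n-1$ collinear points are in (essentially two-to-one) correspondence with factorizations $4a^2-1=uv$ whose factors are incongruent mod $4$, and $\operatorname{diam}M\gtrsim\tfrac12\max|v_j-u_j|$ over the chosen factorizations; in particular, to make the diameter small one must use the $n-1$ factorizations with $u,v$ nearest to $\sqrt{4a^2-1}$.

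\emph{Lower bound.} Writing $\tau$ for the number-of-divisors function, fitting $n-1$ collinear points forces $\tau(4a^2-1)\ge(n-1)/2$. Quantitative information on the multiplicative spacing of divisors (of the sort used for the Erd\H{o}s multiplication-table problem) shows that an integer $N$ with only $\Theta(n)$ divisors cannot have its $\Theta(n)$ most balanced divisor pairs $(u,N/u)$ all with $|N/u-u|$ small unless $N$ itself is large; feeding this into the previous step yields $\operatorname{diam}M\ge 2^{(\kappa+o(1))\log n\log\log n}$ for an explicit $\kappa>0$. (The parity restriction costs only a bounded factor and leaves $\kappa$ unchanged.) The constant $\kappa$ should be optimized, for instance by testing $4a^2-1$ equal to a product of distinct primes of comparable size.

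\emph{Comparison and the small cases.} The bound $d(2,n)\le 2^{c\log n\log\log n}$ from~\cite{harborth1993upper} is attained by circular (hence non-facher) integral point sets; let $c_{\mathrm H}$ be the best constant its proof produces. If $\kappa>c_{\mathrm H}$, then $\operatorname{diam}M>d(2,n)$ for all sufficiently large $n$, so $M$ is not optimal. For the complementary range $6\le n\le N_0$ (with $N_0\ge122$ by Kurz's tables in~\cite{kurz2008bounds}) one checks directly that the least diameter $\mu(n)$ of a facher set of $n$ points containing a unit distance — computable from the divisor structure of the integers $4a^2-1$ exactly as above — strictly exceeds the tabulated value $d(2,n)$; since Theorem~\ref{thm:minimeter_1_planar} forces every optimal set containing a unit distance to be of this form, this completes the argument on that range.

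\emph{Main obstacle.} The crux is verifying $\kappa>c_{\mathrm H}$: both constants have to be pinned down — $\kappa$ from the divisor-distribution estimate, $c_{\mathrm H}$ from a careful analysis of the Harborth--Piepmeyer construction — and it may well turn out that $\kappa\le c_{\mathrm H}$. Should that happen, the conjecture would require either a better small-diameter construction for $d(2,n)$ or a sharper, non-asymptotic lower bound for $\mu(n)$, e.g.\ one using that the $n-1$ collinear points are pairwise at integer distance \emph{simultaneously} with the apex condition, a constraint strictly stronger than merely controlling the divisors of $4a^2-1$. A secondary difficulty is making the divisor-distribution step uniform in $n$, so that the finite verification covers exactly the range the asymptotic argument misses.
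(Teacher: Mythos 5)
First, note that the statement you are proving is presented in the paper as Conjecture~\ref{con:no_optimal_with_edge_1}; the paper offers no proof, only motivation (namely that for $12\leq n\leq 122$ there exist optimal facher sets with integer abscissae and irrational height, which by Theorem~\ref{thm:minimeter_1_planar} cannot contain a unit distance). So there is no ``paper proof'' to match yours against, and what matters is whether your plan closes the problem. It does not. Your reduction step is sound and is essentially the intended picture: by Theorem~\ref{thm:minimeter_1_planar} such an $M$ is facher with $M_1=(0,0)$, $M_2=(1,0)$, apex $N=(1/2,h)$, $4h^2=4a^2-1$, and the collinear points correspond to factorizations $(2e-(2j-1))(2e+(2j-1))=4a^2-1$, so that the diameter is governed by how many nearly balanced divisor pairs $4a^2-1$ admits. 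But the \emph{lower bound} step is exactly the open content of the conjecture, and you have not supplied it: the assertion that ``an integer with only $\Theta(n)$ divisors cannot have its $\Theta(n)$ most balanced divisor pairs all close unless it is large, with an explicit exponent $\kappa$ in $2^{(\kappa+o(1))\log n\log\log n}$'' is not a citable theorem in the uniform, every-integer form you need. Results in the Erd\H{o}s multiplication-table/propinquity-of-divisors circle of ideas are about typical integers or about counting, not about a worst-case explicit constant valid for every $a$; the closest available statement is~\cite[Theorem 4]{kurz2008minimum}, quoted in this paper, which is non-effective in precisely the way you would need it to be effective. You candidly flag the comparison $\kappa>c_{\mathrm H}$ as the crux, and that is correct: since both the facher lower bound and the circular upper bound have the shape $2^{c\log n\log\log n}$, the conjecture reduces to a strict inequality between two constants neither of which is currently pinned down, and it is entirely possible that the inequality goes the wrong way.

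A second, independent gap is the finite range. Even if the constant comparison succeeded asymptotically, the $o(1)$ terms would only give non-optimality for $n\geq N_0$ with $N_0$ coming out of the divisor-distribution argument, and there is no reason to expect $N_0\leq 122$; exact values of $d(2,n)$ are known only for $n\leq 122$, so the ``direct check'' you propose cannot cover the gap between $122$ and whatever $N_0$ the analytic argument produces. (The check for $6\leq n\leq 122$ itself is feasible, since $a\leq\operatorname{diam}M\leq d(2,n)$ bounds the search, but it does not appear in the literature and would have to be carried out.) In short: your proposal is a reasonable research programme that correctly identifies the arithmetic structure behind the conjecture, but it is not a proof, and its two essential steps --- the explicit uniform divisor-spacing lower bound and the constant comparison --- are exactly where the difficulty lives.
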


The motivation for Conjecture~\ref{con:no_optimal_with_edge_1} is based on the results of~\cite[Section 5]{kurz2008minimum},
especially the following theorem.

\begin{theorem}
	For every $12 \leq n \leq 122$, there exist a facher optimal sets $M_n\in\mathfrak{M}(2,n)$
	that consist of $n-1$ points $\{ (b_1,0), ..., (b_{n-1},0)\}$
	and the point $(0,\sqrt{A_n})$,
	where $b_1,...,b_{n-1}, A_n$ are integer and $\sqrt{A_n}\notin\mathbb{N}$.
\end{theorem}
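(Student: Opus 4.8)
The plan is to read the theorem off the complete classification of minimum-diameter planar integral point sets for $n\le 122$, not to reprove that classification. Fixing $n$ with $12\le n\le 122$, I would invoke two facts already on record: that every optimal set in $\mathfrak{M}(2,n)$ is facher for $9\le n\le 122$, so each optimal $M_n$ splits as $n-1$ collinear points together with a single off-line point; and that \cite[Section~5]{kurz2008minimum} exhibits, for each such $n$, an explicit optimal facher set together with the integer $A_n$ occurring in it. The remaining work is then to justify the asserted coordinate presentation and to check the non-squareness of $A_n$, both only for $n$ in the stated range.

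For the coordinate presentation I would argue as follows. Put the line through the collinear points as the $x$-axis, so those points are $(\beta_1,0),\dots,(\beta_{n-1},0)$; since every $|\beta_i-\beta_j|$ is an integer, the $\beta_i$ all share one fractional part, and a translation along the axis replaces them by integers $b_1,\dots,b_{n-1}$. For the off-line point $Q=(u,v)$, $v>0$, the relations $(u-b_i)^2+v^2=m_i^2$ for all $i$ (with $m_i\in\mathbb{N}$ the distances from $Q$ to the collinear points) already force $u\in\mathbb{Q}$ and $v^2\in\mathbb{Q}$; that $Q$ can in fact be put on the line $x=0$ with the $b_i$ still integral is, for the sets listed by Kurz, part of the explicit data, and it is also the content of the canonical-coordinate normalisation for facher sets, cf.\ \cite[Theorem~4]{our-vmmsh-2018}. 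Once $Q=(0,v)$ and $b_i\in\mathbb{Z}$, one has $A_n:=v^2=m_i^2-b_i^2\in\mathbb{Z}$, as required.

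It remains to see $\sqrt{A_n}\notin\mathbb{N}$, i.e.\ that $A_n$ is not a perfect square. I would note that every triangle spanned by points of $M_n$ is either degenerate or has area $\tfrac12|b_i-b_j|\sqrt{A_n}$, so the characteristic of $M_n$ is precisely the squarefree part of $A_n$; hence $\sqrt{A_n}\notin\mathbb{N}$ is equivalent to the characteristic of $M_n$ exceeding $1$. The theorem thus reduces to the finite statement that for every $n$ with $12\le n\le 122$ at least one optimal set has characteristic greater than $1$, which one reads off the tabulated values of $A_n$ in \cite[Section~5]{kurz2008minimum} case by case.

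I expect the main obstacle to be conceptual only in a mild sense: the truth of the theorem is inherited from the computer-assisted determination of $d(2,n)$ and of all optimal sets for $n\le 122$, so the proof is a combination of that input with the elementary normalisation above and a finite look-up. The one genuinely non-automatic point is the normalisation: not every facher integral point set admits a presentation with the $b_i$ integral \emph{and} the apex on $x=0$ simultaneously --- for instance the unique optimal set in $\mathfrak{M}(2,5)$ does not --- so one must use Kurz's explicit optimal sets and not merely the facher property, and this is also why the range is $12\le n\le 122$ rather than, say, $9\le n\le 122$.
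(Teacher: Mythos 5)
Your proposal is correct and takes essentially the same route as the paper: the paper states this theorem with no proof of its own, importing it directly from the computer-assisted classification of optimal sets in \cite[Section~5]{kurz2008minimum}, which is exactly the external input your argument rests on. The coordinate normalisation and the observation that $\sqrt{A_n}\notin\mathbb{N}$ amounts to the characteristic exceeding $1$ are correct but routine bookkeeping that the paper leaves implicit.
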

Such a set can not contain distance 1: due to Theorem~\ref{thm:minimeter_1_planar},
the points with distance 1 should be $(\pm 1/2, 0)$.

For planar integral point sets containing distance 1, we can emphasise on the following problem.
\begin{problem}
	For given $n\geq 3$ and squarefree number $q \neq 1$,
	is there $M\in\mathfrak{M}(2,n)$, such that characteristic of $M$ is $q$
	and distance 1 occurs in $M$?
\end{problem}

The consideration of integral point sets containing distance 1 in higher dimensions gives a rise to another problem.

\begin{problem}
	How many times can the distance 1 occur in a set $M\in\mathfrak{M}(m,n)$?
\end{problem}
The answer for $m=2$ is given by Theorem~\ref{thm:minimeter_1_planar}.
For greater $m$, we will be bold enough to suggest the following conjecture.
\begin{conjecture}
	Consider $M\in\mathfrak{M}(m,n)$, $M_1,M_2\in M$, $|M_1 M_2| =1$.
	Then either $n = m+1$ or $M$ is a ``blowup'' of a facher set and distance 1 occurs in $M$
	no more than $1+\frac{(m-1)^2 - (m-1)}{2}$ times.
\end{conjecture}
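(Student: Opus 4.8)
The plan is to follow the skeleton of the planar argument behind Theorem~\ref{thm:minimeter_1_planar} and to isolate the single place where a genuinely higher-dimensional input is required; the case $m=2$ being Theorem~\ref{thm:minimeter_1_planar}, assume $m\ge3$.

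\emph{Reduction to a line and a hyperplane.} Let $\ell$ be the line through $M_1,M_2$ and let $H$ be the perpendicular bisector hyperplane of the segment $M_1M_2$. For $P\in M$ the triangle inequality gives $\bigl||PM_1|-|PM_2|\bigr|\le 1$ with $|PM_1|,|PM_2|$ positive integers, so $|PM_1|-|PM_2|\in\{-1,0,1\}$; in the two outer cases equality in the triangle inequality forces $P\in\ell$, and in the middle case $P\in H$. Thus $M=L\sqcup S$ with $L=M\cap\ell\ni M_1,M_2$ and $S=M\cap H$, while the midpoint $O$ of $M_1M_2$ has $|OM_1|=\tfrac12\notin\mathbb{N}$, so $O\notin M$ and the union is disjoint. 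Since $\operatorname{aff}L=\ell$ is one-dimensional and $M$ is $m$-dimensional, $S$ must contribute the remaining $m-1$ dimensions, whence $|S|\ge m-1$ automatically; the substance of the theorem is the opposite bound $|S|\le m-1$ together with the precise structure.

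\emph{The obstacle.} Fix coordinates with $M_1=(-\tfrac12,\mathbf{0})$, $M_2=(\tfrac12,\mathbf{0})$, $O=\mathbf{0}$, $H=\{x_1=0\}$. A point of $S$ has the form $(0,y)$ with $\tfrac14+|y|^2$ a perfect square, i.e. $|y|^2=r^2-\tfrac14$ for some integer $r\ge1$; in particular $|y|^2\ge\tfrac34$, and the Gram matrix of the vectors $\{\,2y_P:P\in S\,\}$ has only odd integer entries, is positive semidefinite of rank $m-1$, and has diagonal entries $4r_P^2-1\equiv3\pmod 4$. \textbf{The hard part is to show that such a matrix of size $\ge m$ cannot arise from an admissible configuration}, equivalently that $m$ or more points of pairwise integral distance cannot lie on the perpendicular bisector hyperplane at distances $\tfrac12\sqrt{4r^2-1}$ from $O$. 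For $m=2$ this is exactly the input of Lemma~\ref{lem:no_4_points_in_semigeneral_position_with_distance_1}: there $S\cup\{O\}$ is collinear, so two points of $S$ would force $\tfrac12\sqrt{4r_1^2-1}\pm\tfrac12\sqrt{4r_2^2-1}\in\mathbb{Z}$, which is impossible because $4r^2-1$ is never a square. For $m\ge3$ collinearity is lost and a new idea is needed; I would attempt to combine the rank bound on the Gram matrix with a $2$-adic (quadratic-form) analysis, and feed in the remaining collinear points $Q=(c/2,\mathbf{0})\in L$ (with $c$ necessarily odd, since $|QM_1|=\bigl|\tfrac{c+1}{2}\bigr|$ and $|QM_2|=\bigl|\tfrac{c-1}{2}\bigr|$ are integers) through the identities $(|QP|-r_P)(|QP|+r_P)=\tfrac{c^2-1}{4}$, which already pin $S$ to finitely many concentric $(m-2)$-spheres about $O$. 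Sharpening this to the single-sphere situation of a Kurz blow-up of a facher set, and proving $|S|\le m-1$ at all, is precisely why the statement is only a conjecture.

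\emph{The count, granting $|S|=m-1$.} Then $|L|=n-m+1$, so $n>m+1$ forces $|L|\ge3$. The distance $1$ occurs at most once inside $L$: choosing any $N\in S$ (which exists and lies off $\ell$), each unit segment of $\ell$ with endpoints in $L$ has $N$ on its perpendicular bisector hyperplane, and two distinct such segments have distinct parallel bisectors --- this is the $k=1$ instance of Lemma~\ref{lem:2k-1_segments}, whose proof is unchanged in $\mathbb{R}^m$. The distance $1$ never occurs between $L$ and $S$: a unit distance from a line point $(c/2,\mathbf{0})\in L$ to $(0,y)\in S$ forces $c^2/4+|y|^2=1$, so with $|y|^2\ge\tfrac34$ we get $|y|^2=\tfrac34$, $c=\pm1$ and $r_P=1$; but then any further line point $(c'/2,\mathbf{0})$ with $c'$ odd and $|c'|\ge3$ makes $\bigl((c')^2-1\bigr)/4+1=\bigl((c')^2+3\bigr)/4$ a perfect square, forcing $(2k-c')(2k+c')=3$ and hence $|c'|=1$, a contradiction. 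Finally the distance $1$ occurs at most $\binom{m-1}{2}$ times inside $S$, trivially. Summing, the distance $1$ occurs at most $1+\binom{m-1}{2}=1+\tfrac{(m-1)^2-(m-1)}{2}$ times; and the excluded case $n=m+1$ does contain genuine exceptions (e.g. the regular simplex of side $1$, whose unit-distance count is $\binom{m+1}{2}$), so the dichotomy in the statement is the expected one.
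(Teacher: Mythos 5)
This statement is labelled a \emph{conjecture} in the paper, and the paper supplies no proof of it --- only the remark that the claimed bound $1+\frac{(m-1)^2-(m-1)}{2}$ is motivated by the number of edges of an $(m-2)$-dimensional simplex plus the one unit edge on the line. So there is no argument of the author's to compare yours against, and your proposal does not close the gap either: as you yourself flag, the entire structural content of the statement --- that for $n>m+1$ the set $M$ must be a blowup of a facher set, equivalently that the perpendicular-bisector hyperplane $H$ contains at most $m-1$ points of $M$ and that these form a simplex over a collinear base --- is left unproven. The Gram-matrix observation (all entries odd, diagonal $\equiv 3 \pmod 4$, rank at most $m-1$) is correct but by itself yields no contradiction for $|S|\ge m$: modulo $2$ the matrix is the all-ones matrix, which is already singular, so the parity information alone is consistent with a degenerate configuration, and the proposed $2$-adic refinement is only a direction, not an argument. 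Even granting $|S|\le m-1$, you would still need to show that the points of $S$ sit at a common distance from $O$ (the single-sphere, genuine blowup situation) to match the conjecture's phrasing; your identity $(|QP|-r_P)(|QP|+r_P)=\frac{c^2-1}{4}$ restricts $S$ to finitely many concentric spheres but does not reduce them to one.

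The parts of your write-up that can be checked are sound and would be worth keeping if the conjecture is ever attacked seriously: the trichotomy $|PM_1|-|PM_2|\in\{-1,0,1\}$ splitting $M$ into $L\subset\ell$ and $S\subset H$, the verification that at most one unit segment lies on $\ell$ (the $k=1$ case of Lemma~\ref{lem:2k-1_segments}, whose proof indeed survives in $\mathbb{R}^m$), and the elementary exclusion of unit distances between $L$ and $S$ once $|L|\ge 3$ via $(2k-c')(2k+c')=3$. Together with the trivial bound $\binom{m-1}{2}$ inside $S$, these give exactly the conjectured count \emph{conditional on} the structure theorem. But the conditional step is the conjecture, so what you have is a correct reduction and a plausible plan, not a proof.
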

Such estimate is based on the number of edges in an $(m-2)$-dimensional simplex.

We want to conclude the list of conjectures with the one which appears the easiest to deal with.

\begin{conjecture}
	\label{hyp:prime_planar}
	For every $n \geq 3$, $d \geq 1$ there exists an \textbf{prime}
	set $M\in\mathfrak{M}(2,n)$ which
	contains points $M_1$ and $M_2$ such that distance between $M_1$ and $M_2$ is exactly $d$.
\end{conjecture}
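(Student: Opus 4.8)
The plan is to split on $d$. For $d=1$ there is nothing to do beyond Theorem~\ref{thm:minimeter_1_planar}: the set it produces contains the distance $1$, and any set containing the distance $1$ is automatically prime. For $n=3$ and arbitrary $d\geq 1$ the triangle with side lengths $d,d+1,d+1$ is a non-degenerate integral point set whose distances have greatest common divisor $\gcd(d,d+1)=1$, so it is prime and realizes $d$. Hence I would henceforth assume $n\geq 4$ and $d\geq 2$ and look for a \emph{facher} witness: $n-1\geq 3$ collinear points together with one off-line point, realizing the distance $d$ and simultaneously some distance coprime to $d$ (for instance $d+1$, $d-1$, or $2d+1$). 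The single off-line point has to play, in the plane, the role that the auxiliary simplex of side $d+1$ played in the proof of Theorem~\ref{thm:prime_with_unique_desired_distance}.

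The facher witnesses would come from a generalization of Construction~\ref{con:planar_set_with_minimeter_1}. There the Fermat-number value $a=2^{2^k}-1$ serves only to manufacture a divisor giving $b_H=1$; the distance computations themselves use only that $a$ is odd together with residues of the divisors modulo $4$. So, for a suitable odd integer $a>0$, I would place $N=(0,\sqrt a)$ and, for each divisor $c\mid a$, a pair of collinear points at $\bigl(\pm\tfrac12(a/c-c),0\bigr)$. The identity $\bigl(\tfrac{a/c-c}{2}\bigr)^2+a=\bigl(\tfrac{a/c+c}{2}\bigr)^2$ makes the distance from $N$ to that pair equal to $\tfrac12(a/c+c)$, while the distances among the collinear points are the absolute differences of the numbers $\pm\tfrac12(a/c-c)$; parity of $a$ keeps all of these integral. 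The set has $\tau(a)+1$ points, so the cardinality requirement becomes: pick $a$ with at least $n-1$ divisors and then discard the superfluous collinear points, keeping those that carry the prescribed distances. Realizing the distance $d$ is then a divisor equation such as $a/c+c=2d$ (equivalently $a=c(2d-c)$) or $a/c-c=d$ (equivalently $a=c(c+d)$), and realizing a second distance coprime to $d$ is one more such equation, or a coprimality condition on one of the $\tfrac12(a/c\pm c)$; e.g., for odd $d$ the choice $a=2d-1$ already gives the prime triangle with sides $2d-2,d,d$, which one then tries to inflate by enlarging $a$.

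In this way the whole theorem reduces to a number-theoretic existence statement: for every $d$ and every target cardinality there is an odd integer $a$, of the right residue modulo $4$, that carries a divisor factorization realizing $d$, carries a second distance coprime to $d$, and has at least $n-1$ divisors. I expect this to be the main obstacle, for two reasons. First, the congruences that make the half-integer coordinates close up ($a$ odd, and the relevant divisors having fixed residues modulo $4$) restrict which distances can occur at all: in this symmetric ``hyperbola'' family every distance between two collinear points turns out to be even, while every distance to $N$ is odd or even according to $a\bmod 4$, so $d$ cannot be inserted freely and the residues $d\equiv 1,2\bmod 4$ look the most stubborn. Second, once $d$ is present, padding $a$ with extra prime factors to reach a large cardinality has to be done without destroying the coprimality that makes the set prime, and without collapsing two collinear points or two prescribed distances. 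A possible way around the first difficulty is to drop the symmetry — to let $N$ lie off the perpendicular bisector and work with an asymmetric collinear configuration — or to combine two or three divisor factorizations of a single carefully engineered $a$; a cleaner-looking but equally substantial alternative is to isolate an \emph{extension lemma} (that a finite collinear point set with integral mutual distances satisfying the constraint of Lemma~\ref{lem:2k-1_segments} can always be completed by one off-line point to a facher integral point set) and then simply take collinear points $0,d,2d+1$ plus $n-4$ widely spaced further integers, with $\{0,d\}$ giving the distance $d$ and $\{0,2d+1\}$ forcing the greatest common divisor down to $1$. Proving the Diophantine statement, or this extension lemma, is where I think essentially all of the work lies.
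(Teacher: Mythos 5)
This statement is Conjecture~\ref{hyp:prime_planar}: the paper does not prove it, and explicitly lists it among the open problems (it is the planar analogue of Theorem~\ref{thm:prime_with_desired_distance}, whose proof relies on blowing up into dimension $m\geq 3$ and is unavailable for $m=2$). So there is no proof in the paper to compare against, and the relevant question is whether your attempt closes the gap. It does not. Your boundary cases are fine: for $d=1$ the set from Theorem~\ref{thm:minimeter_1_planar} is automatically prime, and for $n=3$ the triangle with sides $d,d+1,d+1$ is non-degenerate and prime. But for $n\geq 4$, $d\geq 2$ everything is deferred either to a ``number-theoretic existence statement'' (an odd $a$ of the right residue carrying a divisor factorization that realizes $d$, a second distance coprime to $d$, and at least $n-1$ usable divisors) or to an ``extension lemma'' (any admissible collinear integral configuration such as $\{0,d,2d+1,\dots\}$ can be completed by one off-line point to a facher set). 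Neither is proved, and you say yourself that this is where essentially all the work lies; that is exactly the content of the conjecture, so the proposal is a reduction, not a proof. The extension lemma in particular is far from innocuous: already for three prescribed collinear points the existence of an off-line point at integral distances is a nontrivial Diophantine condition (compare the argument of Lemma~\ref{lem:no_4_points_in_semigeneral_position_with_distance_1}, where such a completion is shown to be \emph{impossible} in a related situation), so it cannot be taken for granted even under the constraint of Lemma~\ref{lem:2k-1_segments}.

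One smaller inaccuracy worth flagging: in the symmetric divisor family the distances between collinear points are not all even. With points at $\pm b_J/2$ and all $b_J$ odd, the antipodal pair $M_{J+}M_{J-}$ realizes the odd distance $|b_J|$, and mixed pairs give $|b_{J_1}\pm b_{J_2}|/2$, which can be odd or even (e.g. $b=1,3$ give distances $1$ and $2$). So the parity obstruction you describe for $d\equiv 1,2 \bmod 4$ is not as rigid as stated, which cuts both ways: the construction is more flexible than you claim, but your analysis of which $d$ are reachable would need to be redone before the divisor-equation route could be assessed.
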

This conjecture gives the same claim as Theorem~\ref{thm:prime_with_desired_distance} does, but for $m=2$.
Obviously, Theorem~\ref{thm:prime_with_unique_desired_distance} can be generalized (for $m=2$) into a similar conjecture.

\section{Acknowledgements}
Author thanks Dr. Prof. E.M. Semenov for the fruitful discussion and ideas,
A.S. Chervinskaia for the idea of using the term ``facher'' and other linguistic advices,
and Dr. A.S. Usachev for proofreading.

\printbibliography

\end{document}